\theoremstyle{plain}
\setlist{itemsep=3pt,parsep=0pt,topsep=2pt,partopsep=0pt}  
\setlist{leftmargin=2.5\parindent} 
\def\endofClaim{\hfill\scalebox{.6}{$\Box$}}
\let\subset\subseteq  
\let\rho\varrho 
\def\leq{\leqslant}
\def\geq{\geqslant}
 \newtheorem*{theorem*}{Theorem}
\newtheorem{theorem}{Theorem}
\newtheorem{lemma}[theorem] {Lemma}    
\newtheorem{prop}[theorem] {Proposition}
\newtheorem{claim}[theorem]{Claim} 
\theoremstyle{definition}
\theoremstyle{remark}
\newcommand{\oldqed}{}
\newenvironment{claimproof}[1][Proof]{
  \renewcommand{\oldqed}{\qedsymbol}
  \renewcommand{\qedsymbol}{\endofClaim}
  \begin{proof}[#1]
}{
  \end{proof}
  \renewcommand{\qedsymbol}{\oldqed}
}
\def\NN{\mathbb{N}}
\newcommand{\sat}{\mathrm{sat}}
\newcommand{\satp}{\mathrm{sat_p}}
\newcommand{\exsatp}{\mathrm{exsat_p}}
\newcommand{\vbad}{\mathrm{V_{bad}}}
\title{Partite Saturation Problems}
\author{Barnaby Roberts}
\email{b.j.roberts@lse.ac.uk}
\address{Department of Mathematics, London School of Economics, Houghton Street, London
WC2A 2AE, U.K.}
\begin{document}
\begin{abstract}
We look at several saturation problems in complete balanced blow-ups of graphs.  We let $H[n]$ denote the blow-up of $H$ onto parts of size $n$ and refer to a copy of $H$ in $H[n]$ as \emph{partite} if it has one vertex in each part of $H[n]$.  We then ask how few edges a subgraph $G$ of $H[n]$ can have such that $G$ has no partite copy of $H$ but such that the addition of any new edge from $H[n]$ creates a partite $H$.  
When $H$ is a triangle this value was determined by Ferrara, Jacobson, Pfender, and Wenger in~\cite{FerJacPfeWen}.  Our main result is to calculate this value for $H=K_4$ when $n$ is large.  We also give exact results for paths and stars and show that for $2$-connected graphs the answer is linear in $n$ whilst for graphs which are not $2$-connected the answer is quadratic in $n$.
We also investigate a similar problem where $G$ is permitted to contain partite copies of $H$ but we require that the addition of any new edge from $H[n]$ creates an extra partite copy of $H$.  This problem turns out to be much simpler and we attain exact answers for all cliques and trees.
\end{abstract}

\maketitle

\section{Introduction}
The Tur\'an problem of asking for the maximum number of edges a graph on a fixed number of vertices can have without containing some fixed subgraph $H$ is one of the oldest and most famous questions in extremal graph theory, see \cite{Mantel},\cite{Turan},\cite{ErdStone}.

Since the corresponding minimisation problem - asking how few edges an $H$-free graph can have - trivially gives the answer zero, if we want an interesting complementary question to the Tur\'an problem we can require that our $H$-free graph $G$ also has the property that it \emph{nearly} contains a copy of $H$.
By this we mean that the addition of any new edge to $G$ creates an copy of $H$ as a subgraph.
Such a graph $G$ is called \emph{$H$-saturated} and over $H$-saturated graphs on $n$ vertices the minimum number of edges is called the saturation number, $\sat(H,n)$.
The study of saturation numbers was initiated by Erd\H{o}s, Hajnal and Moon~\cite{ErdHajMoon} when they proved that $\sat(K_r,n)=(r-2)(n-\frac{1}{2}(r-1))$.  
It was later shown by K\'aszonyi and Tuza in~\cite{KasTuza} that cliques have the largest saturation number of any graph on $r$ vertices which in particular implies that for any $H$ the saturation number $\sat(H,n)$ grows linearly in $n$.

These saturation questions can be generalised to require our $H$-free graph $G$ to be a subgraph of another fixed graph $F$.
Here we insist that adding any new edge of $F$ to $G$ would create a copy of $H$ in $G$.
The minimum number of edges in such a $G$ we denote by $\sat(H,F)$.
One natural class of host graphs are complete $r$-partite graphs.
In the bipartite case Bollob\'as~\cite{Bol-ExWeights,Bol-EHM} and Wessel~\cite{Wessel1, Wessel2} independently determined the saturation number $\sat(K_{a,b},K_{c,d})$.
Working in the $r$-partite setting with $r \geq 3$, Ferrara, Jacobson, Pfender, and Wenger determined in~\cite{FerJacPfeWen} the value of $\sat(K_3,K_r^n)$ for sufficiently large $n$ and showed that $\sat(K_3,K_3^n)=6n-6$ for all $n$.

In this paper we consider the saturation problem when the host graph is a blow-up of the forbidden subgraph $H$.
For any graph $H$ and any $n \in \NN$ let \emph{$H[n]$} denote the graph obtained from $H$ by replacing each vertex with an independent set of size $n$ and each edge with a complete bipartite graph between the corresponding independent sets.  
A copy of $H$ in $H[n]$ is called $\emph{partite}$ if it has exactly one vertex in each part of $H[n]$.  
For a subgraph $G$ of $H[n]$ we say $G$ is \emph{$H$-partite-free} if there is no partite copy of $H$ in $G$.  
We say $G$ is \emph{$(H,H[n])$-partite-saturated} if $G$ is $H$-partite-free but for any $uv \in E(H[n]\setminus G)$ the graph $G\cup uv$ is not $H$-partite-free.  
We consider the problem of determining the value
\begin{equation*}
\satp(H,H[n]):=\min\big\{e(G):G\subset H[n] \text{ is $(H,H[n])$-partite-saturated}\big\}
\end{equation*}
for graphs $H$.

Note that for a graph $H$ with no homomorphism onto any proper subgraph of itself we have by definition $\satp(H,H[n])=\sat(H,H[n])$.
In this way we know that $\satp(K_3,K_3[n])=6n-6$ from~\cite{FerJacPfeWen} and can drop the partite requirement when considering cliques.
Our main result, Theorem~\ref{Thm:K4Sat}, is to show that for sufficiently large $n$ we have $\sat(K_4,K_4[n])=18n-21$.
In addition we calculate the partite-saturation numbers of stars and paths in Theorems~\ref{Thm:StarSat} and~\ref{Thm:PathSat} respectively.

In the original paper by Erd\H{o}s, Hajnal and Moon they did not in fact require the graph $G$ to be $H$-free but only required that the addition of any edge would create an extra copy of $H$.
Interestingly for the problem they studied this did not have an effect as the extremal graphs were $K_r$-free even without requiring this restriction.
We consider a similar notion in the partite setting.
For $G\subset H[n]$ and $n\in \NN$ we say $G$ is \emph{$(H,H[n])$-partite-extra-saturated} if for any $uv \in E(H[n]\setminus G)$ the graph $G\cup uv$ has more partite copies of $H$ than $G$.
We also ask, given a graph $H$ and $n\in \NN$, the value of
\begin{equation*}
\exsatp(H,H[n]):=\min\big\{e(G):G\subset H[n] \text{ is $(H,H[n])$-partite-extra-saturated}\big\}\,.
\end{equation*}

We observe some interesting differences in behaviour between these partite saturation numbers and the saturation numbers studied by Erd\H{o}s, Hajnal and Moon.
Whilst for graphs on $r$ vertices cliques gave the largest values of $\sat(H,n)$ we find that cliques are not the graphs which maximise $\satp(H,H[n])$.  
In fact we find in Theorem~\ref{Thm:2-connectivity} that $\satp(H,H[n])$ grows quadratically for graphs $H$ which are not $2$-connected whilst it grows linearly for those which are.
On the other-hand we show in Theorem~\ref{Thm:CliqueExSat} that cliques do maximise the partite-extra-saturation numbers and that all partite-extra-saturation numbers are linear.

\smallskip

\paragraph{\bf Notation}
Most of the notation we use is standard.
In a graph $G$ for a vertex $v \in V(G)$ and a set $X \subset V(G)$ we let $\deg_G(v,X)$ denote the number of neighbours of $v$ in $X$.
Where $X=V(G)$ we will abbreviate to $\deg_G(v)$ and when the graph $G$ is clear from the context we will omit the subscript.
For a vertex $v$ we let $N(v)$ denote the set of neighbours of $v$.

\paragraph{\bf Organisation}

Section~\ref{Section:K4} is dedicated to determining the partite saturation number of $K_4$.
In Section~\ref{Section:PathsStars} we then determine the partite saturation numbers of paths and stars.
We look at the link between $2$-connectivity and the order of magnitude of partite saturation numbers in Section~\ref{Section:2-connectivity} before focusing on partite extra-saturation numbers in Section~\ref{Section:ExSat}.
Finally in Section~\ref{Section:Conclusion} we give some further remarks and open problems.

\section{The Partite Saturation Number of $K_4$}\label{Section:K4}

\begin{theorem}\label{Thm:K4Sat}
For all large enough $n \in \NN$ we have
\begin{equation*}
\sat(K_4,K_4[n])= 18n-21\,.
\end{equation*}
\end{theorem}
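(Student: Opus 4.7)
The plan is to prove the upper and lower bounds of $\sat(K_4,K_4[n])=18n-21$ separately.

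\textbf{Upper bound.} I would exhibit an explicit subgraph $G^\ast\subseteq K_4[n]$ with exactly $18n-21$ edges which is partite-$K_4$-free and saturated. The target edge count corresponds to an average vertex degree of $9-O(1/n)$, which suggests roughly three neighbours into each other part per vertex. A natural template is: fix a bounded ``hub'' set in each part, endow the hubs with a carefully chosen $K_4$-free local configuration, and connect each remaining vertex uniformly to a small fixed set of hubs in each of the other three parts. The additive $-21$ should correspond to edges saved among the hub structure. Verification then reduces to two checks: $G^\ast$ has no partite $K_4$, handled by case analysis on how many of the four vertices are hubs; and every missing cross-pair edge $uv$ admits a saturating pair $(w,x)$ of hubs in the two other parts with $uw,ux,vw,vx,wx\in E(G^\ast)$.

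\textbf{Lower bound.} Let $G\subseteq K_4[n]$ be partite-$K_4$-saturated with $e$ edges. I would proceed via a structural analysis of each vertex. First, establish the dichotomy: every vertex $v$ is either \emph{universal} (adjacent to all of some other part, in which case $\deg v\geq n$), or else has at least one neighbour in each of the three other parts. Indeed, if $v\in A$ has no neighbour in $B$, then any non-edge $vc$ with $c\in C$ cannot be saturated by a pair $(w,x)\in B\times D$ since $vw\notin E(G)$, forcing $v$ to be adjacent to all of $C$, and symmetrically to all of $D$. Second, rule out small degrees in the non-universal case: if $v$ has exactly one neighbour $b,c,d$ in each of $B,C,D$, then the saturation of the non-edges $vb',vc',vd'$ forces $bc,bd,cd\in E(G)$, giving a partite $K_4$ on $\{v,b,c,d\}$, contradicting $K_4$-freeness. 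An analogous case analysis in profiles $(2,1,1), (2,2,1),\ldots$ extends the lower bound on $\deg v$ in the non-universal case. Summing these degree bounds gives $2e\geq 36n-O(1)$, hence $e\geq 18n-O(1)$; a discharging or local optimisation step then pins the constant down to $-21$.

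\textbf{Main obstacle.} The bulk of the work is the lower bound, and the hardest step is extracting the precise additive constant $-21$. A naive degree-sum argument only yields $e\geq 18n-O(1)$; matching $-21$ exactly requires a careful discharging scheme or a fine structural classification of the exceptional low-degree vertices, correlated directly with the edge savings of the extremal construction. I also expect substantial case analysis to rule out the intermediate small degrees $\deg v\in\{4,\ldots,8\}$ in the non-universal case, each case producing a forced partite $K_4$ from the saturation constraints on the non-edges incident to $v$.
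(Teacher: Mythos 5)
Your dichotomy (each vertex is either universal to some part or has a neighbour in all three other parts) and the ensuing minimum-degree-$\geq 4$ argument are sound and match the opening moves of the paper (its Lemma~\ref{Lem:MinDeg4}).
However, the central step of your lower bound --- ``rule out the intermediate small degrees $\deg v\in\{4,\dots,8\}$ in the non-universal case'' so as to force $2e\geq 36n-O(1)$ --- cannot work, and this is not a fixable detail: the extremal graph itself is full of non-universal degree-$4$ and degree-$5$ vertices.
In the construction that realises $18n-21$, every non-hub vertex is adjacent to exactly $4$ or $5$ hub vertices in the other parts and to nothing else; these vertices are non-universal and of degree $4$ or $5$, and adding any edge at such a vertex does create a partite $K_4$.
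So any argument that concludes ``non-universal degree $\geq 9$'' would disprove the theorem you are trying to prove.
The average degree only reaches $9-O(1/n)$ because a bounded set of hub vertices carries linear degree; your plan to split this evenly across all vertices via discharging has no obvious base case from which charge can be pushed.

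What the paper actually does is the opposite of ruling out degree $4$: it \emph{exploits} degree-$4$ vertices.
The key lemma (Lemma~\ref{Lem:Deg4Vertices}) shows that the neighbourhood of a degree-$4$ vertex induces a specific path and that all four of its neighbours have degree at least $n-2$; Lemma~\ref{Lem:Deg4Parts} then shows that at most two parts can have minimum degree $4$; and Lemma~\ref{Lem:AdjLowDegVertices} bounds by a constant the number of low-degree vertices adjacent to other low-degree vertices.
These facts feed a global edge count (Claim~\ref{Cl:ExactDegParts}) that fixes which parts have minimum degree $4$ and which have minimum degree $5$, after which a careful structural classification of $X_2\cup X_4$ into four sets $A^1,A^3,B^1,B^3$ according to adjacency to $\{x_1,x_1',x_3,x_3'\}$ yields the exact bound via the identity $18n-21+(|A^1|-1)(|B^1|-1)+(|A^3|-1)(|B^3|-1)$.
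You would need to rebuild your lower bound around a lemma of this kind (low-degree vertices have structured, high-degree neighbourhoods) rather than around eliminating low degrees.
Additionally, your upper bound is a template, not a construction: to claim equality you still need to write down the $8$-vertex hub configuration, count to $18n-21$, and verify $K_4$-freeness and saturation explicitly, exactly as the paper does in Proposition~\ref{Cl:Construction}.
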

Furthermore we determine the unique graph for which equality holds.

We first give a construction of a graph $G \subset K_4[n]$ that is $(K_4,K_4[n])$-saturated and has $18n-21$ edges.

Let $X_1,X_2,X_3,X_4$ be the parts of $K_4[n]$.
Choose vertices $x_i$ and $x_i'$ in each $X_i$.
Let $Z$ denote the set of these 8 vertices.
Include in $G$ the following 15 edges $x_1x_2$, $x_1x_2'$, $x_1x_3'$, $x_1x_4'$, $x_1'x_2'$, $x_1'x_3$, $x_1'x_4$, $x_2x_3$, $x_2x_4$, $x_2x_4'$, $x_2'x_3'$, $x_2'x_4$, $x_3x_4'$, $x_3'x_4$, $x_3'x_4'$.
We now only add edges between $Z$ and $V(G)\setminus Z$.
Include all edges between $X_1 \setminus Z$ and each of $x_2$, $x_3$, $x_3'$ and $x_4$.
Attach all vertices in $X_2\setminus Z$ to $x_1'$, $x_3$, $x_3'$, $x_4$ and $x_4'$.
Join all of $X_3\setminus Z$ to each of $x_1$, $x_1'$, $x_2$ and $x_4$ and finally add all edges from $X_4\setminus Z$ to $x_1$, $x_1'$, $x_2$, $x_2'$ and $x_3$.

\begin{center}
\begin{figure}[H]

\begin{tikzpicture}
\node (x_1) at ( 2.5,7) [circle, fill=black,inner sep=2pt,label=left:$x_1$] {};
\node (x_1') at ( 4.5,9) [circle, fill=black,inner sep=2pt,label=above:$x_1'$] {};
\node (x_2) at ( 2.5,3.5) [circle, fill=black,inner sep=2pt,label=left:$x_2$] {};
\node (x_2') at ( 4.5,1.5) [circle, fill=black,inner sep=2pt,label=below:$x_2'$] {};
\node (x_3) at ( 10,3.5) [circle, fill=black,inner sep=2pt,label=right:$x_3$] {};
\node (x_3') at ( 8,1.5) [circle, fill=black,inner sep=2pt,label=below:$x_3'$] {};
\node (x_4) at ( 8,9) [circle, fill=black,inner sep=2pt,label=above:$x_4$] {};
\node (x_4') at ( 10,7) [circle, fill=black,inner sep=2pt,label=right:$x_4'$] {};

\node (X_1) at (1.75,8.5) [rectangle,rounded corners=2mm,draw,fill=white,align=center] {$X_1 \setminus Z$\\ all vertices adjacent\\ to: $x_2$, $x_3$, $x_3'$, $x_4$};
\node (X_2) at (1.75,2) [rectangle,rounded corners=2mm,draw,fill=white,align=center] {$X_2 \setminus Z$\\ all vertices adjacent\\ to: $x_1'$, $x_3$, $x_3'$, $x_4$, $x_4'$};
\node (X_3) at (10.75,2) [rectangle,rounded corners=2mm,draw,fill=white,align=center] {$X_3 \setminus Z$\\ all vertices adjacent\\ to: $x_1$, $x_1'$, $x_2$, $x_4$};
\node (X_4) at (10.75,8.5) [rectangle,rounded corners=2mm,draw,fill=white,align=center] {$X_4 \setminus Z$\\ all vertices adjacent\\ to: $x_1$, $x_1'$, $x_2$, $x_2'$, $x_3$};

 \foreach \from/\to in {x_1/x_2',x_1/x_3',x_1/x_4',x_1'/x_2',x_2/x_4',x_2'/x_3',x_2'/x_4,x_3/x_4',x_3'/x_4'}
    \draw [blue] (\from) -- (\to);
    
\foreach \from/\to in {x_1/x_2,x_1'/x_3,x_1'/x_4,x_2/x_3,x_2/x_4,x_3'/x_4}
    \draw [blue] (\from) -- (\to);
\end{tikzpicture}

\caption{$K_4$-Partite-Saturation Construction}
\end{figure}
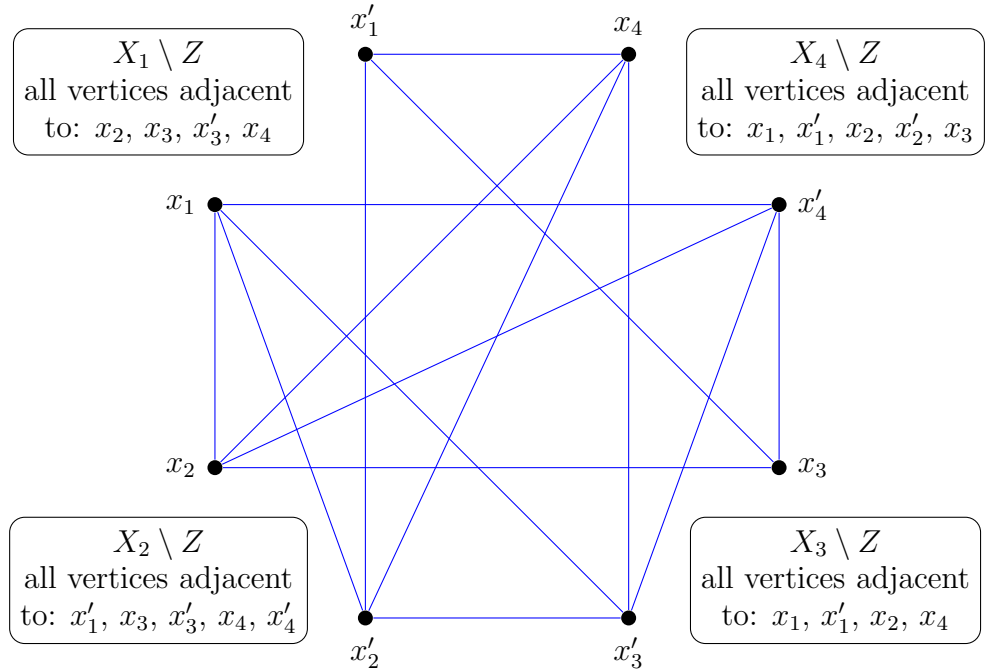
\end{center}

\begin{prop}\label{Cl:Construction}
$G$ is a $(K_4,K_4[n])$-saturated graph with $18n-21$ edges.
\end{prop}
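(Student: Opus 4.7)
The edge count is a direct bookkeeping check: the specified $15$ edges inside $Z$, together with the edges from $V(G)\setminus Z$ to $Z$ (each of the $n-2$ vertices of $X_1\setminus Z$ and $X_3\setminus Z$ has degree $4$ into $Z$, and each of the $n-2$ vertices of $X_2\setminus Z$ and $X_4\setminus Z$ has degree $5$ into $Z$), give $15 + (4+5+4+5)(n-2) = 18n-21$. A structural observation to exploit throughout is that $V(G)\setminus Z$ is independent in $G$, since the construction only adds edges within $Z$ or between $Z$ and $V(G)\setminus Z$. Consequently any partite copy of $K_4$ in $G$ meets $V(G)\setminus Z$ in at most one vertex.

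For partite $K_4$-freeness I would split into two cases. First, if all four vertices lie in $Z$, the copy is indexed by one of the $2^4=16$ choices $(z_1,z_2,z_3,z_4)$ with $z_i\in\{x_i,x_i'\}$; each is ruled out by a direct comparison against the explicit list of $15$ $Z$-edges (one pinpoints, for each quadruple, a pair $z_iz_j$ that is missing). Second, if exactly three vertices lie in $Z$ and one vertex $y\in X_i\setminus Z$, then the three $Z$-vertices must all lie in the neighbourhood of $y$ in $Z$ and must induce a partite triangle on the three parts other than $X_i$; for each $i\in[4]$ that neighbourhood is explicitly listed, and a part-by-part check confirms no three of those vertices (one per other part) span a triangle.

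For saturation I would fix a non-edge $uv\in E(K_4[n])\setminus E(G)$ with $u\in X_i$, $v\in X_j$, and look to produce $w\in X_k$, $w'\in X_\ell$, $\{i,j,k,\ell\}=[4]$, so that $u,v,w,w'$ induce a partite $K_4$ in $G+uv$, i.e.\ $w$ and $w'$ are jointly adjacent to $u$, to $v$, and to each other. I would classify non-edges by the number of endpoints in $Z$. If both endpoints lie in $Z$ there are only finitely many non-edges, and one produces an explicit completion for each, often taking $w$ or $w'$ in $V(G)\setminus Z$ (which, having a uniform neighbourhood in $Z$, provides ample room to match the adjacencies demanded). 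If exactly one endpoint is in $Z$, similar explicit choices work. If both endpoints lie outside $Z$, then since $V(G)\setminus Z$ is independent we must have $w,w'\in Z$, so the completion must live inside $Z$; this is where the particular edge list on $Z$ does the real work, and each of the six part-pair cases is verified directly from that list together with the explicitly listed $Z$-neighbourhoods of the two endpoints.

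The main obstacle is purely the volume of case checking rather than any conceptual difficulty. The construction has only partial symmetries (a rough $X_1\leftrightarrow X_3$, $X_2\leftrightarrow X_4$ pairing, together with limited swaps of $x_i$ and $x_i'$), and the $Z$-edges and the four prescribed neighbourhoods in $Z$ appear to have been fine-tuned so that the saturation verifications succeed just barely; consequently the argument seems to require a careful, explicit enumeration of non-edge types rather than a clean structural reduction.
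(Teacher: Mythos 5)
Your proposal is correct and follows essentially the same route as the paper: both rely on the observation that $V(G)\setminus Z$ is independent, both check partite $K_4$-freeness by reducing to configurations in $Z$, and both verify saturation by classifying candidate non-edges according to how many endpoints lie in $Z$ and arguing case by case. The only cosmetic difference is that the paper streamlines the $K_4$-freeness step by noting that $Z$ induces only six triangles (none of which extends to a $K_4$), whereas you enumerate all $2^4$ quadruples in $Z$ and then the triangles-plus-one-outside cases; both are sound.
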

\begin{proof}
To see that this graph is $K_4$-free note that the graph induced on $V(G)\setminus Z$ has no edges so any $K_4$ would have to come from a triangle in $Z$ extended to a vertex outside of $Z$.
There are just six triangles induced on $Z$ and none of them extend to a $K_4$.

To see that $G$ is $(K_4,K_4[n])$-saturated we first observe that for any pair $i,j$ there is an edge in $Z$ such that $(X_i \cup X_j) \setminus Z$ is contained in the common neighbourhood of the ends of that edge.
Therefore we could only add an edge with at least one end in $Z$.

For a vertex $v \in X_1\setminus Z$ the only incident edges we could add are $vx_2'$ or $vx_4'$.
These additional edges would create a $K_4$ on $vx_2'x_3'x_4$ or $vx_2x_3x_4'$ respectively.
For a vertex $v \in X_2 \setminus Z$ the only incident edge we could add is $vx_1$ but this would create a $K_4$ on $x_1vx_3'x_4'$.
Similar arguments show we cannot add edges incident to $X_3 \setminus Z$ and $X_4 \setminus Z$.
Adding any edge to $Z$ that has either $x_1$ or $x_3'$ as an endpoint will create a $K_4$ in $Z$.
Adding any other edge of $Z$ will create a triangle on $Z$ that extends to a $K_4$ with a vertex outside of $Z$.
That $G$ has $18n-21$ edges is easy to check.
\end{proof}

\medskip

Before proving a matching lower bound we need the following lemmas.
\begin{lemma}\label{Lem:MinDeg4}
Any $(K_4,K_4[n])$-saturated graph $G$ with $n \geq 2$ has minimum degree at least $4$.
\end{lemma}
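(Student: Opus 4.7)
The plan is to argue by contradiction: assume some vertex $v$, say $v \in X_1$, satisfies $\deg_G(v) \le 3$, and derive a partite $K_4$ in $G$. Set $d_i := \deg_G(v, X_i)$ for $i \in \{2,3,4\}$, so $d_2 + d_3 + d_4 \le 3$. The key local observation supplied by the saturation property is this: for every non-neighbour $u$ of $v$ in $X_i$, the edge $uv$ must complete a partite $K_4$, and that $K_4$ necessarily has the shape $\{v,u,x,y\}$ with $x \in N(v) \cap X_j$, $y \in N(v) \cap X_k$ (where $\{i,j,k\} = \{2,3,4\}$), $xy \in E(G)$, and $u$ adjacent to both $x$ and $y$.

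The immediate consequence is that whenever $d_i < n$, so that $X_i$ actually contains a non-neighbour of $v$, we must have both $d_j \ge 1$ and $d_k \ge 1$. Since $n \ge 2$ and $d_2+d_3+d_4 \le 3$, at most one of the $d_i$'s can equal $n$, and the indices strictly less than $n$ impose these lower bounds on the others.

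A short case analysis on $(d_2, d_3, d_4)$ then eliminates every possibility except $d_2 = d_3 = d_4 = 1$: in each other distribution some $d_i$ equals $0$ while a different part still contains a non-neighbour of $v$ whose presence demands $d_i \ge 1$. The edge case where $d_i = n$ for some $i$ only arises when $n \in \{2,3\}$, and the same type of argument dispatches it.

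It remains to handle $d_2 = d_3 = d_4 = 1$. Write $N(v) = \{a,b,c\}$ with $a \in X_2$, $b \in X_3$, $c \in X_4$. Since $n \ge 2$, each part $X_i$ contains a non-neighbour of $v$; applying the saturation condition to non-neighbours in $X_2$, $X_3$, $X_4$ in turn forces the edges $bc$, $ac$, $ab$ to lie in $G$. But then $\{v,a,b,c\}$ spans a partite $K_4$ in $G$, contradicting the $K_4$-partite-freeness of $G$. I anticipate no significant obstacle: the argument is entirely local at $v$, and the only mild subtlety is the possibility that some $d_i$ equals $n$ for small $n$, which the case analysis absorbs without difficulty.
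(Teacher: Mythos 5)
Your proof is correct and follows essentially the same route as the paper: you rule out the case that $v$ has no neighbour in some part by applying the saturation condition to a non-neighbour elsewhere, force $\deg(v, X_i) = 1$ for each $i \in \{2,3,4\}$, and then show that the three pairwise edges among $N(v)$ are all forced by saturation, producing a partite $K_4$. The extra bookkeeping via $(d_2,d_3,d_4)$ and the explicit handling of the $d_i = n$ edge case for $n \in \{2,3\}$ is a slightly more careful packaging of the same two-step argument the paper gives.
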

\begin{proof}
Let $G$ be a $(K_4,K_4[n])$-saturated graph on $X_1 \cup \cdots \cup X_4$.
Suppose for contradiction that there exists $a_1 \in X_1$ with at most 3 neighbours.  
If $a_1$ has no neighbours in one part, say $X_2$, then by saturation it must be adjacent to all vertices in the other parts, which for $n \geq 2$ contradicts the fact that $\deg(a_1)\leq 3$.  
So $a_1$ must have exactly three neighbours with one in each of the parts.  
Call these  $x_i \in X_i$ for $i=2,3,4$.  
Then for any $i=2,3,4$ adding the edge $a_1y_i$ for some $y_i \in X_i \setminus x_i$ must create a $K_4$.  
This implies that $x_2x_3$, $x_2x_4$ and $x_3x_4$ are all edges of $G$ but along with $a_1$ this gives a $K_4$.
\end{proof}

We can also say more about the neighbourhoods of vertices with degree exactly $4$.
\begin{lemma}\label{Lem:Deg4Vertices}
Let $G$ be a $(K_4,K_4[n])$-saturated graph on $X_1 \cup \cdots \cup X_4$ with $n \geq 3$ and let $v$ be a vertex of degree exactly $4$.
Then $v$ has one neighbour in each of two parts and two neighbours in one part.
The neighbourhood of $v$ induces a path beginning and ending with the vertices in the same part.
All neighbours of $v$ have degree at least $n-2$.
\end{lemma}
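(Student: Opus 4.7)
The plan is to take a degree-$4$ vertex $v$, place $v \in X_1$ without loss of generality, and let $n_i = |N(v) \cap X_i|$ for $i=2,3,4$, so that $n_2+n_3+n_4=4$. The argument will then run in three steps, with the final two claims of the lemma essentially falling out of the structural analysis needed for the first claim.

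First I would show each $n_i\ge 1$, which forces the distribution $(2,1,1)$. Suppose for contradiction that $n_2=0$. For any $y\in X_3\setminus N(v)$, the $K_4$ created when we add $vy$ must use two neighbours $u,u'$ of $v$ with $u\in X_2\cup X_4$, $u'\in X_2\cup X_4$, $uu'\in E$, and $y$ adjacent to both. Since $N(v)\cap X_2=\emptyset$, both $u,u'$ would lie in $X_4$, which is impossible. Hence $X_3\subseteq N(v)$, and symmetrically $X_4\subseteq N(v)$, forcing $n_3+n_4=2n\ge 6$ when $n\ge 3$, contradicting $n_3+n_4=4$. So $n_2\ge 1$, and by symmetry $n_3,n_4\ge 1$, giving distribution $(2,1,1)$.

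Next, say $n_2=2$ and write $N(v)=\{u_2,u_2',u_3,u_4\}$ with $u_2,u_2'\in X_2$, $u_3\in X_3$, $u_4\in X_4$. Using $n\ge 3$ pick $w\in X_2\setminus\{u_2,u_2'\}$; saturation at the edge $vw$ forces $u_3u_4\in E(G)$ together with $w\sim u_3$ and $w\sim u_4$. The $K_4$-free hypothesis applied to the triangle $vu_3u_4$ then tells us that neither $u_2$ nor $u_2'$ can be adjacent to both $u_3$ and $u_4$. Running the same saturation argument at $w\in X_3\setminus\{u_3\}$ forces an edge from $\{u_2,u_2'\}$ to $u_4$ (and $w$ to be adjacent to both endpoints of that edge), and symmetrically there must be an edge from $\{u_2,u_2'\}$ to $u_3$. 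Combining these "at least one" conditions with the "at most one" conditions from $K_4$-freeness, a short case check eliminates every configuration except (up to swapping $u_2$ and $u_2'$) the one in which the edges in $N(v)$ are exactly $u_2u_4$, $u_3u_4$, $u_2'u_3$; this gives the path $u_2\text{--}u_4\text{--}u_3\text{--}u_2'$ with both endpoints in $X_2$.

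Finally, the degree bound is a direct corollary of the saturation inferences already made. Every $w\in X_2\setminus\{u_2,u_2'\}$ is adjacent to both $u_3$ and $u_4$, so $u_3$ and $u_4$ each inherit at least $n-2$ neighbours in $X_2$. For $w\in X_3\setminus\{u_3\}$, the only edge of $N(v)$ into $X_4$ that is available to close the $K_4$ is $u_2u_4$ (since $u_2'u_4\notin E$), so $w$ must be adjacent to $u_2$; this shows $u_2$ has at least $n-1$ neighbours in $X_3$. Symmetrically $u_2'$ has at least $n-1$ neighbours in $X_4$, which gives the lemma with room to spare.

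The main obstacle is the case analysis in the second step: one has to simultaneously juggle the two "at most one" constraints coming from $K_4$-freeness and the two "at least one" constraints coming from saturation, and keep track of which vertices outside $N(v)$ actually exist, which is precisely where the hypothesis $n\ge 3$ is used (to guarantee $X_2\setminus\{u_2,u_2'\}\ne\emptyset$). Once the unique edge pattern in $N(v)$ is identified, parts (b) and (c) of the lemma follow immediately from the inferences made along the way.
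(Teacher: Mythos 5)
Your proof is correct and follows essentially the same route as the paper: rule out a part with no neighbour of $v$ (using $n\geq 3$ to force degree $>4$), fix the $(2,1,1)$ split, then use saturation at one vertex in each of the three parts outside $N(v)$ to pin down which of the five possible edges inside $N(v)$ are present, landing on the unique path configuration and reading off the degree bounds from which common neighbours get forced. The only cosmetic difference is indexing — you placed the doubled part in $X_2$, the paper places it in $X_3$ — so no substantive divergence.
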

\begin{proof}
Suppose $v \in X_1$.
If $v$ had no neighbour in some $X_i$ ($i \neq 1$) it would be adjacent to all vertices in other parts meaning it would have degree greater than $4$.
Suppose without loss of generality that the neighbours of $v$ are $x_2$, $x_3$, $x_3'$ and $x_4$ with the subscripts denoting the parts containing each vertex.
By considering the effect of adding the edge $vy_3$ for some $y_3 \in X_3 \setminus \{x_3,x_3'\}$ we see that the edge $x_2x_4$ is present.
We also see that all vertices in $X_3 \setminus \{x_3,x_3'\}$ are adjacent to $x_2$ and $x_4$.
Similarly by considering a vertex in $X_2 \setminus \{x_2\}$ we see that there must be an edge between $x_4$ and one of $x_3$ or $x_3'$.
Without loss of generality assume $x_4x_3'$ is present.
Finally by considering a vertex in $X_4 \setminus \{x_4\}$ we see that $x_2$ is adjacent to either $x_3$ or $x_3'$.
In order not to create a $K_4$ it must be that $x_2x_3$ is present.
We now cannot have the edges $x_4x_3$ or $x_2x_3'$.
We then see that all vertices in $X_4 \setminus \{x_4\}$ are adjacent to $x_3$ and all vertices in $X_2 \setminus \{x_2\}$ are adjacent to $x_3'$.
Hence the neighbours of $v$ all have degree at least $n-2$.
\end{proof}
It follows that when $n >6$ vertices of degree exactly $4$ cannot be adjacent.

The following lemma gives us minimum degree conditions that more reflect those of the upper bound construction.
\begin{lemma}\label{Lem:Deg4Parts}
Let $G$ be a $(K_4,K_4[n])$-saturated graph with $n \geq 22$ on $X_1 \cup \cdots \cup X_4$.
There cannot be two degree $4$ vertices, $a_i \in X_i$ and $a_j \in X_j$ with $i \neq j$ such that $a_i$ has just one neighbour in $X_j$.
Furthermore there are at most two parts with minimum degree $4$.
\end{lemma}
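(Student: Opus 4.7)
The plan is to argue by contradiction. Suppose WLOG that $a_1 \in X_1$ and $a_2 \in X_2$ are degree-$4$ vertices with $a_1$ having unique neighbour $x_2$ in $X_2$. By Lemma~\ref{Lem:Deg4Vertices}, every neighbour of $a_1$ has degree at least $n - 2 \geq 20$, so $x_2 \neq a_2$ and $a_1 a_2 \notin E(G)$. Applying Lemma~\ref{Lem:Deg4Vertices} to $a_1$, its double part must be $X_3$ or $X_4$; WLOG $X_3$, so $N(a_1) = \{x_2, u, u', w\}$ with $u, u' \in X_3$, $w \in X_4$, and (WLOG) the induced path is $u - w - x_2 - u'$. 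This gives $uw, wx_2, x_2 u' \in E(G)$ and $u x_2, u' w \notin E(G)$. For any $y_2 \in X_2 \setminus \{x_2\}$, adding $a_1 y_2$ creates a $K_4$ whose vertices in $X_3$ and $X_4$ must be neighbours of $a_1$ forming an edge, and the only such edge is $uw$. Hence $y_2 u, y_2 w \in E(G)$ for every $y_2 \in X_2 \setminus \{x_2\}$; in particular $a_2 u, a_2 w \in E(G)$.

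I apply Lemma~\ref{Lem:Deg4Vertices} to $a_2$ and case on its double part. If $X_1$: adding $a_2 u'$ would force a $K_4$ using the $X_4$-neighbour $w$ of $a_2$ and some $X_1$-neighbour, but this requires $u' w \in E(G)$, contradicting the non-edge derived above. If $X_4$: an analogous argument (swapping the roles of $X_3$ and $X_4$) reduces to the $X_3$ case. If $X_3$, say $N(a_2) = \{r, u, u^*, w\}$, the induced-path constraint together with $uw \in E(G)$ forces the path on $N(a_2)$ to be $u - w - r - u^*$; saturation on $a_2 y_3$ for $y_3 \in X_3 \setminus \{u, u^*\}$ then forces $y_3 r, y_3 w \in E(G)$, and taking $y_3 = u'$ (if $u' \neq u^*$) yields $u' w \in E(G)$, a contradiction. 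Hence $u^* = u'$, and $a_1, a_2$ share the triple $\{u, u', w\}$. Fully symmetric saturation arguments then give $X_1 \setminus \{r\} \subseteq N(u) \cap N(w)$, every $y_3 \in X_3 \setminus \{u, u'\}$ adjacent to $\{r, x_2, w\}$, and every $y_4 \in X_4 \setminus \{w\}$ adjacent to $\{r, x_2, u'\}$. $K_4$-freeness then forces $rx_2 \notin E(G)$ (else $\{r, x_2, y_3, w\}$ is a $K_4$ for any $y_3 \in X_3 \setminus \{u, u'\}$) and no edges between $X_1 \setminus \{r\}$ and $X_2 \setminus \{x_2\}$ (else $\{y_1, y_2, u, w\}$ is a $K_4$).

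To extract the final contradiction, choose any $y_3 \in X_3 \setminus \{u, u'\}$ (non-empty by $n \geq 22$); by Lemma~\ref{Lem:MinDeg4}, $\deg(y_3) \geq 4$. If $\deg(y_3) = 4$, then Lemma~\ref{Lem:Deg4Vertices} requires $N(y_3) = \{r, x_2, w, v\}$ (for the fourth neighbour $v$) to induce a path; but for $v \in X_1 \setminus \{r\}$ or $v \in X_2 \setminus \{x_2\}$, the forced edges $rw, x_2 w, wv$ make $w$ have degree three in the induced subgraph, ruling out a path, while for $v \in X_4 \setminus \{w\}$ the edges $rw, rv, x_2 w, x_2 v$ form a $4$-cycle $r - w - x_2 - v - r$, again not a path. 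Hence $\deg(y_3) \geq 5$. The same degree analysis propagated to any neighbour of $y_3$ lying in $X_1 \setminus \{r\}$ (which then has zero neighbours in $X_2$ and so also fails the degree-$4$ structure), combined with the large forced common-neighbourhood around $u, u', w, r, x_2$, produces an explicit $K_4$ — the contradiction. For the second statement, suppose three parts (say $X_1, X_2, X_3$) each contain a degree-$4$ vertex; by Lemma~\ref{Lem:Deg4Vertices} each has two ``single'' parts, and the first statement just proved forbids a single part from housing another degree-$4$ vertex. For $a_1 \in X_1$ its single parts lie in $\{X_2, X_3, X_4\}$, but $X_2$ and $X_3$ each house a degree-$4$ vertex, so only $X_4$ remains, contradicting the need for two distinct single parts. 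The main obstacle is the last step of the $X_3$-case: extracting the explicit $K_4$ from the rigid forced structure requires careful iteration of the degree and induced-path analyses, and uses $n \geq 22$ to guarantee enough ``generic'' vertices in the parts for the propagation to succeed.
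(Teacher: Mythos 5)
Your setup and your reading of Lemma~\ref{Lem:Deg4Vertices} are correct, and your notation maps onto the paper's via $u = x_3'$, $u' = x_3$, $w = x_4$, $r = x_1$.  Your handling of the $X_1$-double-part case and your deduction $u^*=u'$ (collapsing what the paper treats as a sub-case of case (i) together with case (ii)) are both sound, and your derivation of the forced adjacencies ($X_1\setminus\{r\}\subseteq N(u)\cap N(w)$, every vertex of $X_3\setminus\{u,u'\}$ adjacent to $\{r,x_2,w\}$, every vertex of $X_4\setminus\{w\}$ adjacent to $\{r,x_2,u'\}$, $rx_2\notin E(G)$, no edges between $X_1\setminus\{r\}$ and $X_2\setminus\{x_2\}$) is also correct, as is the step showing $\deg(y_3)\geq 5$ for $y_3\in X_3\setminus\{u,u'\}$.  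The ``at most two parts of minimum degree $4$'' conclusion you draw from the first part plus Lemma~\ref{Lem:Deg4Vertices} is also fine.

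The gap is exactly where you admit it is.  The sentence ``produces an explicit $K_4$ --- the contradiction'' is not established, and in fact no $K_4$ is forced by the structure you have assembled: the paper does not extract a $K_4$ directly at this point either.  Instead it observes that if every vertex of $X_3\setminus\{u,u'\}$ were adjacent to every vertex of $X_4\setminus\{w\}$, then $G$ would already have at least $(n-2)(n-1)$ edges, which exceeds $18n$ once $n\ge 22$; only after extracting a \emph{non}-adjacent pair $v_3\in X_3\setminus\{u,u'\}$, $v_4\in X_4\setminus\{w\}$ does saturation on $v_3v_4$ yield a $K_4$ (using that the witnessing edge in $N(v_3)\cap(X_1\cup X_2)$ has exactly one end in $\{r,x_2\}$).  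So the alternative you must rule out is not a $K_4$-freeness violation but a too-many-edges violation, and the hypothesis $n\geq 22$ enters precisely there; since your argument never counts edges you have no way to exclude the dense residual bipartite graph between $X_3$ and $X_4$.  (Your statement that a neighbour of $y_3$ in $X_1\setminus\{r\}$ ``has zero neighbours in $X_2$'' is also not correct as stated: you only ruled out edges to $X_2\setminus\{x_2\}$, not to $x_2$ itself.)  A secondary problem is your claim that the $X_4$-double-part case reduces to the $X_3$ case ``by swapping the roles of $X_3$ and $X_4$'': this symmetry is absent, because $a_1$ has two neighbours in $X_3$ but only one in $X_4$.  The paper treats this as an independent case (its case (iii)) with its own chain of forced adjacencies, and that case also terminates with the same kind of edge-count argument (comparing $(n-2)^2$ with $18n$).
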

\begin{proof}
Suppose for contradiction that $a_1 \in X_1$ and $a_2 \in X_2$ are degree $4$ vertices such that $a_1$ has just one neighbour in $X_2$ and let $x_2,x_3,x_3',x_4$ denote the neighbours of $a_1$.  
Then (up to switching between $x_3'$ and $x_3$) the edges $x_2x_3,x_2x_4,x_3'x_4$ are all present.  
We also know that $x_2$ is adjacent to all of $(X_3 \cup X_4) \setminus x_3'$, that $x_3$ is adjacent to all of $X_4 \setminus x_4$, that $x_3'$ is adjacent to all of $X_2 \setminus x_2$, and $x_4$ is adjacent to all of $(X_2 \cup X_3)\setminus x_3$.
In particular this implies we have the edges $a_2x_3'$ and $a_2x_4$.  
The vertex $a_2$ also has some neighbour $x_1 \in X_1 \setminus a_1$.  
As $a_2$ has degree $4$ it must have one more neighbour.  
We split into cases depending on where this final neighbour is and show that each case leads to a contradiction.
The possible cases are:
\begin{enumerate}[label=(\roman*)]
\item $a_2$ has another neighbour $v \in (X_1 \cup X_3) \setminus \{a_1,x_1,x_3,x_3'\}$.

\item $a_2$ is adjacent to $x_3$.

\item $a_2$ has another neighbour $x_4' \in X_4 \setminus x_4$.
\end{enumerate}

Case i) Since $x_3$ is not adjacent to $a_2$ it must be adjacent to $x_4$ as $X_4 \cap N(y_2)=\{x_4\}$ and hence $x_1x_2x_3x_4$ forms a $K_4$.
\smallskip

Case ii) By considering vertices in $X_3 \setminus N(a_2)$ we must have the edge $x_1x_4$ and we see that $x_1$ is adjacent to all of $X_3 \setminus \{x_3,x_3'\}$. 
We also see that all vertices in $X_1 \setminus N(a_2)$ are adjacent to $x_3'$ and $x_4$.
This means that in fact all vertices in $(X_1 \cup X_2)\setminus \{x_1,x_2\}$ are adjacent to $x_3'$ and $x_4$ and hence all edges in $X_1 \cup X_2$ have one end in $\{x_1,x_2\}$.
In fact all edges in $X_1 \cup X_2$ have exactly one end in $\{x_1,x_2\}$ as if the edge $x_1x_2$ were present this would create a $K_4$ with $x_4$ and any vertex in $X_3 \setminus \{x_3,x_3'\}$.

If all vertices in $X_3 \setminus \{x_3,x_3'\}$ were adjacent to all of $X_4 \setminus x_4$ this would give at least $(n-2)(n-1)$ edges which is greater than $18n$ for $n \geq 22$.
Therefore consider some vertex $v_3 \in X_3 \setminus \{x_3,x_3'\}$ which is non-adjacent to some $v_4 \in X_4 \setminus x_4$.
As $v_4$ is non-adjacent to $v_3$ it must be adjacent to both ends of an edge in $N(v_3)\cap (X_1 \cup X_2)$.
We know that this edge has exactly one end in $\{x_1,x_2\}$ but this creates a $K_4$ with $v_3$ and $x_4$.
\smallskip

Case iii) As $x_4'$ is not adjacent to $a_1$ it is adjacent to $x_2$ and $x_3$.  
By considering vertices in $X_4 \setminus N(a_2)$ we see that $x_1x_3'$ is an edge of $G$ and all vertices in $X_4 \setminus N(a_2)$ are adjacent to $x_1$ and $x_3'$.  
By considering vertices in $X_3 \setminus N(a_2)$ we see that $x_1x_4'$ is an edge of $G$ (as $x_1x_4$ would create a $K_4$) and all vertices in $X_3 \setminus N(a_2)$ are adjacent to $x_1$ and $x_4'$.
Finally by considering vertices in $X_1 \setminus N(a_2)$ we observe that all vertices in $X_1\setminus x_1$ are adjacent to $x_3'$ and $x_4$ (as $x_4'$ cannot be adjacent to $x_3'$). 
Now we know that all vertices in $(X_1 \cup X_2)\setminus \{x_1,x_2\}$ are adjacent to both ends of the edge $x_3'x_4$ and so there are no edges in $(X_1 \cup X_2)\setminus \{x_1,x_2\}$.
Furthermore $x_1x_2 \notin E(G)$ as this would create a $K_4$ with $x_4$ and any vertex in $X_3 \setminus \{x_3,x_3'\}$.
If all vertices in $X_3 \setminus \{x_3,x_3'\}$ were adjacent to all of $X_4 \setminus \{x_4,x_4'\}$ there would be at least $(n-2)^2$ edges in $G$ which is more than $18n$ edges for $n \geq 22$.
Therefore we can assume there is a vertex $v_3 \in X_3 \setminus \{x_3,x_3'\}$ and a vertex $v_4 \in X_4 \setminus \{x_4,x_4'\}$ which is not adjacent to $v_3$.
Then $v_4$ must be adjacent to both ends of an edge $e$ in $N(v_3)\cap (X_1 \cup X_2)$.
This edge has exactly one end in $\{x_1,x_2\}$.
If the edge $e$ is incident to $x_2$ but not $x_1$ then it forms a $K_4$ with $v_3$ and $x_4$.
If instead $e$ is incident to $x_1$ but not $x_2$ it forms a $K_4$ with $x_3'$ and $v_4$.

It follows from Lemma~\ref{Lem:Deg4Vertices} and the above that there can be at most two parts with minimum degree exactly $4$ otherwise we would have a degree $4$ vertex with just one neighbour in the part containing another degree $4$ vertex.
\end{proof}

Another distinctive feature of the upper bound construction is that low degree vertices are not adjacent to other low degree vertices.
In proving the lower bound it is helpful to prove that at most a constant number of low degree vertices are adjacent to other low degree vertices.
We do that in the following lemma.

\begin{lemma}\label{Lem:AdjLowDegVertices}
For any $k \geq 5$ suppose $G$ is a $(K_4,K_4[n])$-saturated graph on $X_1 \cup \cdots \cup X_4$.  Then there are at most $24k^2(2k^2)^{2k^2}$ vertices $v$ such that $5 \leq \deg(v) \leq k$ and $v$ is adjacent to another vertex of degree between $5$ and $k$.
\end{lemma}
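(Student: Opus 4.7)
The plan is to classify bad vertices---those with $5 \le \deg(v) \le k$ having a neighbor of degree in $[5,k]$---by a bounded-complexity combinatorial \emph{type}, count the types, and bound the multiplicity per type. For each bad vertex $v$, I would fix an arbitrary bad neighbor $u = u(v)$ and consider the local graph $L_v := G[N[v] \cup N[u]]$. Since $|N[v]|, |N[u]| \le k+1$ and both $v, u$ lie in $N[v] \cap N[u]$, we have $|N[v] \cup N[u]| \le 2k$. The type of $v$ will be the isomorphism class of $L_v$ with the part labels inherited from $X_1,\dots,X_4$ and with $v$ and $u$ distinguished. At most $\binom{2k}{2} \le 2k^2$ potential edges give at most $2^{2k^2}$ edge patterns, and with at most $4^{2k}$ part-labelings and at most $(2k)^2$ choices of distinguished vertices, plus extra labels (an ordering of the four parts contributing $4!=24$ and a distinguished popular pair contributing $k^2$, introduced below), the total number of decorated types is bounded by $24 k^2 (2k^2)^{2k^2}$.

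The structural input from saturation is the following: for a bad $v \in X_1$ and any part $X_i \ne X_1$, each non-neighbor $w \in X_i \setminus N(v)$ forces $G[N(v)]$ to contain an edge between the two remaining parts with both endpoints in $N(w)$, since adding $vw$ must create a $K_4$. Pigeonholing over the at most $\binom{k}{2}$ edges of $G[N(v)]$ yields a popular covering edge whose endpoints share at least $(n-k)/\binom{k}{2}$ common neighbors in $X_i$; for $n$ large these endpoints have degree far exceeding $k$ and cannot be bad. I take the distinguished popular pair in the type to be, for instance, the covering edge for the smallest-index $X_i \ne X_1$ on which this pigeonhole applies.

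The multiplicity argument splits on whether two bad vertices $v_1, v_2$ of the same type are ``twins'' (with $N(v_1) = N(v_2)$) or not. In the twin case, any shared bad neighbor $u$ of a set of $m$ twins satisfies $\deg_G(u, X_1) \ge m$, and $\deg(u) \le k$ then gives $m \le k$. In the non-twin case, one uses the popular covering pair: the type records its combinatorial position, and a case analysis---over the $4!$ orderings of parts and the $k^2$ candidate popular pairs---constrains where distinct bad vertices of the same type can lie, giving a per-type multiplicity absorbed into the factor $24 k^2$. Multiplying types by multiplicity yields the claimed total.

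The main obstacle is the non-twin case. Two non-twin bad vertices of the same type could in principle have disjoint neighborhoods situated in different regions of $G$, yet still be combinatorially identical. The delicate point is to exploit the global saturation condition---not merely the local structure at each $v$---to show that their popular covering pairs must nevertheless interact restrictively, for instance by sharing a common high-degree endpoint, so that the number of such bad vertices stays bounded in terms of $k$ alone, independent of $n$.
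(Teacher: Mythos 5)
You have correctly isolated the crux of the problem---explaining why two combinatorially indistinguishable bad vertices cannot proliferate---but you have not solved it, and without that step the argument does not go through. A local ``type'' built from the isomorphism class of $G[N[v]\cup N[u]]$ carries no global information: in a host graph on $4n$ vertices there is no a priori obstruction to having $\Omega(n)$ pairwise non-twin bad vertices whose local pictures are all isomorphic, decorated part-labels, distinguished pair and all. The popular covering edge you extract has endpoints of degree at least $(n-k)/\binom{k}{2}$, but this only tells you those endpoints are not themselves bad; it does not tie the covering edges of distinct bad vertices of the same type to one another in any way. You acknowledge this explicitly as ``the main obstacle,'' so as written the proposal is a plan with a hole in its central step, not a proof.

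The paper attacks the problem from a genuinely different angle that avoids multiplicity questions entirely. Assuming more than $24k^2(2k^2)^{2k^2}$ bad vertices, it pins down two large sets $A_1\subset X_1$ and $B_1\subset X_2$ of bad vertices with an edge from every vertex of one to the other, and then runs an iterative refinement: at step $i$ it chooses a vertex $u_i\in U_i$, uses saturation to assign to each non-neighbour $v\in V_i$ a witness edge in $E(X_3,X_4)$ whose endpoints are adjacent to both $u_i$ and $v$, pigeonholes over the at most $k^2$ witness edges available to $u_i$ to find a popular one $e_{i+1}$, and passes to the subset $V_{i+1}$ of $V_i$ adjacent to both ends of $e_{i+1}$ (shrinking by a factor at most $2k^2$ each time). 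After $k^2+1$ rounds the surviving set is nonempty, yet any vertex in it would be adjacent to both endpoints of each of $e_1,\dots,e_{k^2+1}$ while having at most $k$ neighbours in each of $X_3$ and $X_4$, forcing a repeated witness edge $e_s=e_t$; combining the adjacencies of a vertex of $U_t$ to $V_s$ with the shared witness edge then produces a $K_4$, a contradiction. This is where the witness edges of distinct bad vertices are forced to interact---exactly the interaction your proposal needs but does not supply. The tower-type constant $24k^2(2k^2)^{2k^2}$ in the statement comes precisely from the $k^2+1$-fold shrinking by $2k^2$, not from counting isomorphism types.

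In short: your structural observation (saturation forces witness edges, and pigeonhole finds popular ones) is the right raw material and is also used in the paper, but deploying it to classify vertices by isolated local type cannot work without a further global argument, and the paper instead uses it to iteratively shrink a nested family of neighbour-sets until a forced collision of witness edges yields a $K_4$.
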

\begin{proof} 
Call a vertex \emph{bad} if it satisfies $5 \leq \deg(v) \leq k$ and is adjacent to another vertex with degree between $5$ and $k$. 
Let $K=24k^2(2k^2)^{2k^2}$ and suppose for contradiction that there are more than $K$ bad vertices in $G$.
Without loss of generality assume there are at least $\frac{K}{4}$ such vertices in $X_1$.
Call the set of these vertices $A_0$ and let $B_0$ denote the set of bad vertices in $X_2 \cup X_3 \cup X_4$ which are adjacent to a bad vertex in $A_0$.
By counting $e(A_0,B_0)$ from each side we see that $|A_0| \leq e(A_0,B_0) \leq k|B_0|$ and hence $|B_0| \geq \frac{K}{4k}$.
By averaging we may assume without loss of generality that there are at least $\frac{K}{12k}$ bad vertices in $X_2$ adjacent to vertices in $A_0$.
Let $B_1$ denote $B_0 \cap X_2$ and let $A_1$ be the vertices of $A_0$ which have a neighbour in $B_1$.
Then every vertex in $A_1$ and $B_1$ has a neighbour in the other.
By double counting we see that $|B_1|\leq e(A_1,B_1) \leq k|A_1|$ and so we know that both $A_1$ and $B_1$ contain at least $\frac{K}{12k^2}$ vertices.
\medskip

For $i=0,...,k^2 +1$ we construct a collection of sets $U_i \subset X_1$, $V_i \subset X_2$ such that $U_{i+1}\subset U_i$ and $V_{i+1}\subset U_i$.  We also select vertices $u_i \in U_i$ and edges $e_i \in E(X_3,X_4)$ such that the following properties are satisfied for all $i=0,...,k^2 +1$.
\begin{enumerate}[label=(\roman*) ]\label{Enu:Properties}
\item\label{Properties:VtoE} All vertices in $V_{i+1}$ are adjacent to both endpoints of $e_{i+1}$.

\item\label{Properties:utoE} The vertex $u_i$ is adjacent to both endpoints of $e_{i+1}$.

\item\label{Properties:Vsize} $|V_i| \geq \frac{K}{12k} (2k^2)^{-i}=2k(2k^2)^{2k^2 -i}$.

\item\label{Properties:UAdjV} Each vertex in $U_i$ has a neighbour in $V_i$.

\item\label{Properties:VAdjU} Each vertex in $V_i$ has a neighbour in $U_i$.

\item\label{Properties:Usize} $|U_i| \geq \frac{K}{12k^2} (2k^2)^{-i}=2(2k^2)^{2k^2 -i}$.

\end{enumerate}
Before constructing these objects we show how they prove the lemma.
Since $|V_i|\geq 2k(2k^2)^{2k^2 -i}$ we see that the set $V_{k^2 +1}$ is non-empty.
Any vertex in $V_{k^2 +1}$ is adjacent to both ends of all the edges $e_1,...,e_{k^2}$.
As vertices in $V_{k^2 +1}$ have at most $k$ neighbours it must be that two of these edges are the same.
If $e_s=e_t$ for some $s<t \leq k^2$ then we have that $u_t$ is adjacent to some vertex $v$ in $V_s$.
As $v$ is in $V_s$ it is adjacent to both ends of $e_s$ and so forms a $K_4$ along with $u_t$.
This gives our contradiction.

We begin constructing these objects by letting $U_0=A_1$ and $V_0=B_1$.
Given $U_i$ and $V_i$ satisfying the above properties we choose any $u_i \in U_i$ and will find $U_{i+1}$, $V_{i+1}$, $e_i$ and $P_i$ satisfying the properties above.
By saturation for any vertex $v$ in $V_i \setminus N(u_i)$ there exists an edge $e \in E(X_3,X_4)$ such that both $v$ and $u_i$ are adjacent to both of the endpoints of $e$.
Since $u_i$ has at most $k$ neighbours there are fewer than $k^2$ such candidates for $e$ and hence at least $\frac{1}{k^2}|V_i \setminus N(u_i)|$ vertices of $V_i \setminus N(u_i)$ are adjacent to the endpoints of the same edge $e \in E(X_3,X_4)$.
Let $e_{i+1}$ be this edge and let $V_{i+1}$ be the vertices of $V_i \setminus N(u_i)$ that are adjacent to both ends of $e_{i+1}$.
From this we see that properties~\ref{Properties:VtoE} and~\ref{Properties:utoE} hold.

Using $|V_i| \geq \frac{K}{12k} (2k^2)^{-i} \geq 2k$ we then have
\begin{equation*}
\begin{split}
|V_{i+1}|  \geq &\frac{1}{k^2}|V_i \setminus N(u_i)| \geq \frac{1}{k^2}(|V_i|-k)\\ \geq &\frac{1}{2k^2}|V_i| \geq \frac{K}{12k} (2k^2)^{-(i+1)}\,.
\end{split}
\end{equation*}
This gives property~\ref{Properties:Vsize}.
We let $U_{i+1}= U_i \cap N(V_{i+1})$ which ensures \ref{Properties:UAdjV} and \ref{Properties:VAdjU}.
Therefore $|V_{i+1}| \leq e(U_{i+1},V_{i+1}) \leq k |U_{i+1}|$ and we see that $|U_{i+1}| \geq \frac{1}{k}|V_{i+1}| \geq \frac{K}{12k^2} (2k^2)^{-(i+1)}$ giving \ref{Properties:Usize}.
\end{proof}

With these lemmas we are now ready to prove Theorem~\ref{Thm:K4Sat}.
\begin{proof}[Proof of Theorem~\ref{Thm:K4Sat}]
Let $G$ be a $(K_4,K_4[n])$-saturated graph.

We first make the following claim, the proof of which we postpone, about the minimum degree conditions of the parts of $G$.
\begin{claim}\label{Cl:ExactDegParts}
If $G$ has at most $18n-21$ edges then $G$ has precisely two parts of minimum degree exactly $4$ and two parts of minimum degree exactly $5$.
\end{claim}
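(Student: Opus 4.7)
The plan is to argue by contradiction. Let $\delta_i$ denote the minimum degree of part $X_i$. By Lemma~\ref{Lem:MinDeg4} every $\delta_i \geq 4$, and by Lemma~\ref{Lem:Deg4Parts} at most two of the $\delta_i$ equal $4$. Hence
$\sum_{i=1}^4 \delta_i \geq 4\cdot 2 + 5\cdot 2 = 18$,
with equality if and only if exactly two parts have minimum degree $4$ and two have minimum degree $5$. The claim is thus equivalent to showing $\sum_i \delta_i \leq 18$.

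Suppose for contradiction that $\sum_i \delta_i \geq 19$. After relabelling, one of three cases applies: \emph{(i)} exactly one $\delta_i$ equals $4$ and the other three are $\geq 5$; \emph{(ii)} every $\delta_i \geq 5$; or \emph{(iii)} exactly two $\delta_i$ equal $4$ and some other $\delta_j \geq 6$. In each case my goal is to derive $e(G) > 18n - 21$.

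The naive double count $2e(G) = \sum_v \deg(v) \geq n \sum_i \delta_i$ only yields $e(G) \geq 9.5n$ and is far too weak, so I plan to sharpen it using the earlier structural lemmas. Lemma~\ref{Lem:Deg4Vertices} implies that every degree-$4$ vertex has all four of its neighbours of degree at least $n-2$, and in particular a degree-$5$ vertex can never be adjacent to a degree-$4$ vertex; combined with Lemma~\ref{Lem:AdjLowDegVertices} applied with $k=5$, this means that outside a constant-size exceptional set every degree-$5$ vertex has all its neighbours of degree at least $6$. Using these, in each part $X_i$ I intend to show that the actual degree sum $\sum_{v \in X_i} \deg(v)$ exceeds $n\delta_i$ by $\Omega(n)$: either because $X_i$ contains degree-$4$ vertices whose high-degree neighbours drive up the degree sums in other parts by $\Omega(n)$, or because the bulk of its degree-$\delta_i$ vertices (with $\delta_i \geq 5$) force their neighbours to be of strictly higher degree. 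Summing these excesses across the four parts should give $2e(G) > 36n - 42$, contradicting the hypothesis.

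The main obstacle is Case~(iii), where only a single $\delta_j$ rises by one, from $5$ to $6$: the direct gain in the minimum-degree sum is a mere $n$, which does not obviously overcome the $21$-edge slack in $18n - 21$. To close this gap I plan to propagate the constraint via saturation: the degree-$6$ vertices in $X_j$ force (up to constantly many exceptions) their neighbours to be of degree $\geq 7$ by another application of Lemma~\ref{Lem:AdjLowDegVertices}, and the rigid structure of degree-$4$ neighbourhoods in the other two parts guaranteed by Lemma~\ref{Lem:Deg4Vertices} then pins down an excess of $\Omega(n)$ additional high-degree edges, finally yielding the strict inequality $2e(G) > 36n - 42$.
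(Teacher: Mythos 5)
Your reduction to showing $\sum_i\delta_i\leq 18$ is correct and mirrors the paper's framing (both rely on Lemma~\ref{Lem:MinDeg4} and Lemma~\ref{Lem:Deg4Parts} to get $\sum_i\delta_i\geq 18$ with equality exactly in the configuration claimed). But the counting machinery you then propose has two genuine gaps, and they are not cosmetic.

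The first concerns the use of Lemma~\ref{Lem:AdjLowDegVertices} with $k=5$ (then $k=6$). That application tells you that degree-$5$ vertices, outside a bounded exceptional set, have all neighbours of degree at least $6$, but it says nothing about \emph{how many} vertices have degree $\geq 6$. The paper instead takes a single large threshold $k=180$: then $e(G)\leq 18n-21$ forces $|A|=|\{v:\deg(v)>k\}|<\tfrac{36n}{k+1}$, so that nearly all $4n$ vertices are low-degree (degree between $4$ and $k$) and, apart from the bounded bad set, have all neighbours in the small set $A$. With $k=5$ the analogous bound $|A|<6n$ is vacuous since there are only $4n$ vertices. You could in principle iterate the lemma for $k=5,6,7,\dots$, but you would then need a stopping argument plus control on the accumulating exceptional sets, which you do not supply and which the single large-$k$ application sidesteps.

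The second, and more serious, gap is that ``$\Omega(n)$ excess per part'' cannot close the numerical gap. From $2e(G)=\sum_v\deg(v)\geq n\sum_i\delta_i\geq 19n$ you need roughly $17n$ of total excess to reach $36n-42$, and ``$\Omega(n)$'' is consistent with a total excess of only $4cn$ for tiny $c$. The paper's argument does not halve at all: since the set $B\cup C$ of non-bad low-degree vertices has all its edges going into the disjoint set $A$, those edges are pairwise distinct, giving $e(G)\geq\sum_{v\in B\cup C}\deg(v)\geq 5|B|+4|C|$ with $|B|+|C|\approx 4n$. That unhalved count, not a per-part degree-sum excess, is what produces a bound near $20n$ and makes the $18n$ threshold attainable. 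Your Case~(iii) plan inherits this problem: the degree-$\geq 7$ neighbours you invoke could all be a handful of very-high-degree vertices, and ``pins down an excess of $\Omega(n)$'' does not identify the $\sim 17n$ you would need. The paper instead refines the unhalved count by splitting $B$ into $B^{(5)}$ and $B^{(6+)}$ and using that a part of minimum degree $\geq 6$ bounds $|B^{(5)}|$.
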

From Lemma~\ref{Lem:Deg4Parts} we know that all degree $4$ vertices in the two minimum degree $4$ parts have two neighbours in the other minimum degree $4$ part.
We can now assume we have degree $4$ vertices $a_1 \in X_1$ and $a_3 \in X_3$.  
Let the neighbours of $a_1$ be $x_2$, $x_3$, $x_3'$ and $x_4$.
We see that all vertices in $X_3 \setminus \{x_3,x_3'\}$ (including $a_3$) are adjacent to $x_2$ and $x_4$ and that $x_2$ and $x_4$ are adjacent.
Let the other two neighbours of $a_3$ be $x_1$ and $x_1'$.
Since any vertex $v$ in $X_2 \setminus x_2$ is not adjacent to $a_1$, adding the edge $a_1v$ must create a $K_4$ using $v$ and $a_1$.
Similarly, since any vertex $v$ in $X_2 \setminus x_2$ is not adjacent to $a_3$, adding the edge $a_3v$ must create a $K_4$ using $v$ and $a_3$.
This implies that $v$ is adjacent to $x_4$ and that $x_4$ is adjacent to one of $x_1$ or $x_1'$ and also one of $x_3$ or $x_3'$.
Without loss of generality assume we have the edges $x_1'x_4$ and $x_3'x_4$.
Similar arguments with a vertex in $X_4 \setminus x_4$ show that all vertices in $X_4$ are adjacent to $x_2$ and also that we have the edges $x_1x_2$ and $x_2x_3$.

We further see that by saturation every vertex of $(X_1\cup X_3)\setminus \{x_1,x_1',x_3,x_3'\}$ is adjacent to $x_2$ and $x_4$.
This means there are no edges with both ends lying in $(X_1\cup X_3)\setminus \{x_1,x_1',x_3,x_3'\}$.
All vertices in $X_2 \setminus x_2$ are adjacent to $x_1'$, $x_3'$ and $x_4$.
All vertices of $X_4 \setminus x_4$ are adjacent to $x_1$, $x_3$ and $x_2$.
\medskip

We now have that all vertices in $(X_1 \cup X_3)\setminus \{x_1,x_1',x_3,x_3'\}$ are adjacent to $x_2$ and $x_4$.  
All vertices in $X_2 \setminus x_2$ are adjacent to $x_1'$, $x_3'$ and $x_4$ whilst all vertices in $X_4 \setminus x_4$ are adjacent to all of $x_1$, $x_2$ and $x_3$.

The following claim, for which we again postpone the proof, gives us more conditions on the neighbourhoods of various vertices.
\begin{claim}\label{Cl:MoreDetails}
All vertices in $X_1 \setminus \{x_1,x_1'\}$ are adjacent to $x_3$ and $x_3'$.
All vertices in $X_3 \setminus \{x_3,x_3'\}$ are adjacent to $x_1$ and $x_1'$.
All vertices in $(X_2 \cup X_4) \setminus \{x_2,x_4\}$ are adjacent to at least 3 of $\{x_1,x_1',x_3,x_3'\}$.
Both $x_1x_3'$ and $x_1'x_3$ are edges of $G$.
\end{claim}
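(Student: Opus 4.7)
The plan is to prove Claim~\ref{Cl:MoreDetails} by saturation arguments applied to each asserted edge, combined with a global check against $e(G) \le 18n-21$. A key preliminary is to record four forbidden edges dictated by the already-established structure around the degree-$4$ vertices $a_1$ and $a_3$: $x_3 x_4 \notin E(G)$ and $x_2 x_3' \notin E(G)$ (each would complete a $K_4$ with $a_1$ and two of its neighbours), and $x_1 x_4 \notin E(G)$ and $x_1' x_2 \notin E(G)$ (each the analogue with $a_3$). These restrictions constrain where the auxiliary vertices $z_2, z_4$ of every forced partite $K_4$ can live.

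For parts (1) and (2), symmetric under $X_1 \leftrightarrow X_3$, take $v \in X_1 \setminus \{x_1, x_1'\}$ and assume $v \not\sim x_3$. The forced $K_4$ on $\{v, x_3, z_2, z_4\}$ must have $z_4 \in X_4 \setminus \{x_4\}$ since $x_3 x_4 \notin E(G)$; any such $z_4$ is automatically adjacent to $\{x_1, x_2, x_3\}$, so the only new requirement is $v z_4 \in E(G)$, pinning an ``extra'' neighbour of $v$ outside the target set $\{x_2, x_3, x_3', x_4\}$. The parallel argument using $x_2 x_3' \notin E(G)$ handles $v \not\sim x_3'$. Summing these extras over all violations in $X_1 \setminus \{x_1, x_1'\}$ (and symmetrically in $X_3 \setminus \{x_3, x_3'\}$), and comparing to the construction's tight edge count, leads to a contradiction either by producing a partite $K_4$ already in $G$ (via the diagonal neighbour entering a triangle completed by baseline edges) or by exceeding $18n-21$ once the secondary baselines from (3) and (4) are invoked.

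For part (4), suppose $x_1 x_3' \notin E(G)$. The forced $K_4$ uses $y_2 \in X_2 \setminus \{x_2\}$ with $y_2 x_1 \in E(G)$ (since $x_2 x_3' \notin E(G)$) and $y_4 \in X_4 \setminus \{x_4\}$ with $y_4 x_3' \in E(G)$ (since $x_1 x_4 \notin E(G)$), plus $y_2 y_4 \in E(G)$. I would use the baseline adjacencies of $y_2$ to $\{x_1', x_3', x_4\}$ and of $y_4$ to $\{x_1, x_2, x_3\}$, along with parts (1) and (2), to extract an explicit partite $K_4$ already in $G$, most plausibly on a quadruple $\{y_2, y_4, u_1, u_3\}$ with $u_1 \in X_1 \setminus \{x_1, x_1'\}$ and $u_3 \in X_3 \setminus \{x_3, x_3'\}$ chosen via a secondary saturation argument (for example, on the non-edge $y_2 u_3$ for a suitable $u_3$). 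The symmetric argument yields $x_1' x_3 \in E(G)$. Part (3) then follows: if some $v \in X_2 \setminus \{x_2\}$ is adjacent to neither $x_1$ nor $x_3$, the two saturation $K_4$'s on adding $v x_1$ and $v x_3$ each carry an extra edge, whose aggregate cost over $(X_2 \cup X_4) \setminus \{x_2, x_4\}$ collides with the $18n-21$ ceiling.

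The chief obstacle is the mutual dependence of the four statements: each saturation-forced $K_4$ invokes adjacencies guaranteed only by later parts. The cleanest path is an ordered argument --- (1) and (2) first, up to a bounded number of exceptions controlled by Lemma~\ref{Lem:AdjLowDegVertices}, then (4), then (3) --- carrying a single edge-count ledger throughout and closing each stage by verifying that any outstanding exception would force $e(G) > 18n-21$. The delicate point throughout is, given the many candidate locations for $z_2, z_4, y_2, y_4, u_1, u_3$, pinning the witness that actually produces a $K_4$ in $G$ without circular reliance on the unproved portions of the claim.
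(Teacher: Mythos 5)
Your opening observation is correct and useful: from the structure already forced around $a_1$ and $a_3$, the four edges $x_3x_4$, $x_2x_3'$, $x_1x_4$, $x_1'x_2$ are indeed all absent (each would produce a $K_4$ through one of $a_1,a_3$), and the paper implicitly uses this. However, the rest of the proposal is a plan rather than a proof, and the plan does not close. The central step --- ``summing these extras over all violations \dots leads to a contradiction either by producing a partite $K_4$ already in $G$ \dots or by exceeding $18n-21$'' --- cannot work as stated. If $v \in X_1\setminus\{x_1,x_1'\}$ fails $v\sim x_3$, saturation on $vx_3$ hands $v$ a neighbour $z_4 \in X_4\setminus\{x_4\}$, but this is not an \emph{extra} edge beyond what a degree-$4$ vertex already needs: $v$ can simply trade $x_3$ for $z_4$ and still have degree $4$ with neighbours $\{x_2,x_4,z_4,w\}$ for some $w \in X_3$. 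No surplus against the $18n-21$ ledger appears, and the $K_4$ you hope arises ``via the diagonal neighbour entering a triangle completed by baseline edges'' does not materialise: with $z_2 \ne x_2$ the quadruple $\{v,z_2,z_4,\cdot\}$ has no forced common neighbour in $X_3$ since $v\not\sim x_3$ and $z_4$ is only guaranteed adjacent to $x_3$, $x_3'$-adjacency for $z_4$ being exactly what part (3)/(4) would supply. You have diagnosed the circularity yourself; the proposal does not break it.

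The missing ingredient --- and the reason the paper's argument has a different shape --- is Claim~\ref{Cl:ExactDegParts}. It guarantees a degree-$5$ vertex $a_2 \in X_2\setminus\{x_2\}$, and since $a_2$ is already known to be adjacent to $x_1'$, $x_3'$, $x_4$, it has at most two remaining neighbours. The paper anchors on $a_2$ and runs a finite case analysis on whether $a_2$ is adjacent to neither, exactly one, or both of $x_1,x_3$; in the later cases it pins a second degree-$5$ vertex $a_4 \in X_4$ and repeats. The power of this is that every vertex $u$ outside $N(a_2)\cup\{a_2\}$ in another part is forced by saturation on $ua_2$ to be adjacent to both ends of some edge inside $N(a_2)$ --- a set of at most $\binom{5}{2}$ candidate edges, already largely pinned down. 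This is what propagates adjacency to $x_3,x_3'$ (respectively $x_1,x_1'$) across the \emph{entire} parts $X_1,X_3$ at once, with no exceptional set to carry and no circular appeal to later parts of the claim. Also note that Lemma~\ref{Lem:AdjLowDegVertices} bounds the number of low-degree vertices adjacent to \emph{each other}; it gives no control over which fixed high-degree vertices ($x_3$, $x_3'$, etc.) a low-degree vertex must hit, so it cannot supply the ``bounded exceptions'' your ordered ledger would need. To repair your proposal you would need to import the degree-$5$ anchor from Claim~\ref{Cl:ExactDegParts} and replace the global counting stage with case analysis on the exact neighbourhood of that vertex.
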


Under the assumption of Claim~\ref{Cl:MoreDetails} we now see that all vertices in $X_2 \setminus x_2$ are adjacent to $x_1'$, $x_3'$, $x_4$ and one of $x_1$ or $x_3$.
Let $A^1$ denote the set of vertices in $X_2 \setminus x_2$ which are adjacent to $x_1$ but not $x_3$ and let $A^3$ denote the set of vertices in $X_2 \setminus x_2$ which are adjacent to $x_3$ but not $x_1$.

Similarly all vertices in $X_4 \setminus x_4$ are adjacent to $x_1$, $x_3$, $x_2$ and one of $x_1'$ or $x_3'$.
Let $B^1$ denote the set of vertices in $X_4 \setminus x_4$ which are adjacent to $x_1'$ but not $x_3'$ and let $B^3$ denote the set of vertices in $X_4 \setminus x_4$ which are adjacent to $x_3'$ but not $x_1'$.

Adding any edge between $A^1$ and $B^1$ (likewise between $A^3$ and $B^3$) cannot create a $K_4$ so by saturation the induced graphs on $(A^1,B^1)$ and $(A^3,B^3)$ are complete.
Any edge between $A^1$ and $B^3$ would create a $K_4$ with $x_1x_3'$ whilst any edge between $A^3$ and $B^1$ would give a $K_4$ using $x_1'x_3$ therefore the bipartite graphs on $(A^1,B^3)$ and $(A^3,B^1)$ are empty.

Hence we see that there are at least 
\begin{equation*}
\begin{split}
&5\big(2n-2-|A^1|-|A^3|-|B^1|-|B^3|\big) \\ +&4\big(|A^1|+|A^3|+|B^1|+|B^3|\big) \\ 
+&|A^1||B^1|+|A^3||B^3|+4n-4+1
\end{split}
\end{equation*}
edges with at least one end in $X_2 \cup X_4$.
The $+1$ term comes from the edge $x_2x_4$ and the $+4n-4$ term comes from the edges with one end in $\{x_2,x_4\}$ and the other end in $X_1 \cup X_3$.
Along with the $4n-6$ edges between $X_1$ and $X_3$ this gives a total of at least 
\begin{equation}\label{eq:final}
18n-21 +\big(|A^1|-1\big)\big(|B^1|-1\big)+\big(|A^3|-1\big)\big(|B^3|-1\big)
\end{equation}
edges. 
We argue that either $A^1$ or $B^1$ being non-empty implies the other is non-empty.

Suppose there were a vertex in $A^1$.  
Then because it has degree at least $5$ but is not adjacent to $x_3$ it has a neighbour $v$ in $X_4 \setminus x_4$.  
This neighbour $v$ cannot be adjacent to $x_3'$ or we would have a $K_4$.  
Therefore $v \in B^1$.
Similarly for a vertex in $B^1$.
Likewise either of $A^3$ or $B^3$ being non-empty implies the other is also non-empty. 

This now means we have at least $18n-21$ edges.
Furthermore, since $A^1 \cup A^3 = X_2 \setminus x_2$ and $B^1 \cup B^3 = X_4 \setminus x_4$, equality in \eqref{eq:final} is attained only if either $|A^1|=|B^3|=1$ or $|A^3|=|B^1|=1$.
Letting $x_2'$ and $x_4'$ be the vertices in the sets of size $1$ we have our extremal construction.

It remains to prove Claims~\ref{Cl:ExactDegParts} and~\ref{Cl:MoreDetails}.

\begin{claimproof}[Proof of Claim~\ref{Cl:ExactDegParts}]
We use Lemma~\ref{Lem:AdjLowDegVertices} applied with $k=180$. As in Lemma~\ref{Lem:AdjLowDegVertices} we refer to vertices of degree between $5$ and $k$ which are adjacent to another such vertex as \emph{bad}.

We now split our vertices into groups by their degrees and whether or not they are bad, and then count edges of $G$ by counting edges between these groups.

We label our groups as follows
\begin{itemize}
\item $\vbad$ is the set of bad vertices.
\item $A:= \{v: \deg(v) \geq k+1\}$.
\item $B:= \{v: 5 \leq \deg(v)\leq k\}\setminus \vbad$.
\item $C:= \{v: \deg(v)=4\}$.
\end{itemize}

We note that vertices in $B\cup C$ only have neighbours in $A$.

Now $e(G) \geq e(B,A)+e(C,A)\geq 5|B|+4|C|$.
We also have $e(G) \geq e(A,V(G)) \geq \frac{k+1}{2}|A|$.
If $|A|\geq \frac{36n}{k+1}$ this gives at least $18n$ edges so we may assume $|A|< \frac{36n}{k+1}$.

Along with the fact that $|\vbad| \leq K=24k^2(2k^2)^{(2k^2)}$ we see that $|B|\geq 4n-|C|- K-\frac{36n}{k+1}$.
Since $e(G) \geq 5|B|+4|C|$ we have at least $20n-|C|-5K-\frac{180n}{k+1}$ edges.

If we have at most one $X_i$ with minimum degree $4$ we know $|C| \leq n$.
This implies that $G$ has at least $19n-5K-\frac{180n}{k+1}$ edges.
For $k=180$ and large enough $n$ this is at least $18n$.

We can also rule out the possibility of there being a part with minimum degree greater than $5$.
With $\vbad$, $A$, and $C$ defined as above let $B^{(5)}:=\{v \in B: \deg(v)=5\}$ and let $B^{(6+)}:=\{v \in B: \deg(v)\geq 6\}$.
We still have that $|B|=|B^{(5)}|+|B^{(6+)}|\geq 4n-|C|- K-\frac{36n}{k+1}$ and $|C| \leq 2n$.
If one part had minimum degree at least $6$ that would imply that $|B^{(5)}|\leq n$ and so we would have 
\begin{equation*}
\begin{split}
e(G) \geq & 6|B^{(6+)}|+5|B^{(5)}|+4|C| \\
= & 6|B| - |B^{(5)}| + 4|C| \\
\geq & 6\big(4n -|C| - K - \tfrac{36n}{k+1}\big) - |B^{(5)}| + 4|C| \\
= & 24n - 2|C| - |B^{(5)}| - 6K - \tfrac{216n}{k+1} \\
\geq & 19n - 6K - \tfrac{216n}{k+1}\,.
\end{split}
\end{equation*}
For $k=216$ and $n$ large enough this is more than $18n$.
\end{claimproof}
\begin{claimproof}[Proof of Claim~\ref{Cl:MoreDetails}]
We first consider a degree $5$ vertex, $a_2$, in $X_2 \setminus x_2$.
We consider separately the cases of whether $a_2$ is adjacent to neither, one, or both of $x_1$ and $x_3$.
\medskip

Firstly we suppose the vertex $a_2$ is not adjacent either of $x_1$ or $x_3$.
Adding the edge $a_2x_1$ must create a $K_4$ using $a_2$, $x_1$ and a vertex in $X_4$.  
Since $x_4$ is not adjacent to $x_1$ it must be the case that $a_2$ has a neighbour $x_4' \in X_4 \setminus x_4$.
If $a_2$ had two no neighbours in $(X_1 \cup X_3) \setminus \{x_1',x_3'\}$ there would have to be an edge from $x_1'$ to $x_3'$ but this would create a $K_4$.
Assume, without loss of generality, that $a_2$ has a neighbour $x_1'' \in X_1 \setminus \{x_1,x_1'\}$.
By considering vertices in $X_4 \setminus N(a_2)$ we see that $x_3'$ is adjacent to $x_1''$.
This means we now have a $K_4$ on the vertices $x_1'',a_2,x_3',x_4$.
\medskip

If instead $a_2$ had exactly one neighbour from $\{x_1,x_3\}$ then by symmetry we may assume it is adjacent to $x_1$ but not $x_3$.
By saturation the addition of the edge $a_2x_3$ must create a $K_4$.
Since $x_3$ is not adjacent to $x_4$ the vertex $a_2$ must have a neighbour $x_4'$ in $X_4 \setminus x_4$.
Now $a_2$ is adjacent to $x_1$, $x_1'$, $x_3'$, $x_4$ and $x_4'$ and because $a_2$ has degree 5 these are all of its neighbours.
As the only neighbour of $a_2$ in $X_3$ is $x_3'$ it must be the case that all vertices in $(X_1 \cup X_4)\setminus N(a_2)$ are adjacent to $x_3'$.
We also see that if any vertex $v$ in $X_3 \setminus \{x_3,x_3\}$ were not adjacent to $x_1'$ then, since adding the edge $a_2v$ must create a $K_4$, we must have that $v$ is adjacent to $x_1$ and $x_4'$ which would create a $K_4$ on $\{x_1,x_2,v,x_4'\}$.
Therefore every vertex in $X_3 \setminus \{x_3,x_3'\}$ is adjacent to $x_1'$.
By considering vertices on $X_4 \setminus N(a_2)$ it must also be the case that $x_3'$ is adjacent to $x_1$.
From the fact that $x_3$ is not adjacent to $a_2$ we can see that $x_3$ must be adjacent to $x_1'$ and that $x_4'$ is also adjacent to $x_1'$.
Now consider a degree 5 vertex, $a_4$ in $X_4 \setminus \{x_4,x_4'\}$.
We know that $a_4$ is adjacent to $x_3'$ and we split into the case of when $a_4$ is adjacent to $x_1'$ or not.
\medskip

If $a_4$ is not adjacent to $x_1'$ then $a_4$ has a neighbour $x_2' \in X_2 \setminus x_2$.
We know that $x_2'$ is adjacent to $x_1'$.
In order to create a $K_4$ if $a_4x_1'$ were added it must be the case that $x_2'$ is adjacent to $x_3$.
As $x_1$ is the only neighbour of $a_4$ in $X_1$ is must be the case that all vertices in $(X_2 \cup X_3) \setminus N(a_4)$ are adjacent to $x_1$.
Now all vertices in $(X_3 \cup X_4)\setminus \{x_3,x_3',x_4\}$ are adjacent to both $x_1$ and $x_2$ which are themselves adjacent to each other.
Therefore there are no edges between $X_3\setminus \{x_3,x_3'\}$ and $X_4 \setminus x_4$.
We also know that all vertices in $(X_2 \cup X_4)\setminus \{x_2,x_2',x_4,x_4'\}$ are adjacent to both ends of the edge $x_1x_3'$.
Hence there are no edges between $X_2 \setminus \{x_2,x_2'\}$ and $X_4 \setminus \{x_4,x_4'\}$.
Since all vertices in $(X_1 \cup X_2)\setminus \{x_1,x_1',x_2\}$ are adjacent to $x_3'$ and $x_4$ there are no edges between $X_1 \setminus \{x_1,x_1'\}$ and $X_2 \setminus x_2$.
In particular any vertex $v$ in $X_1 \setminus \{x_1,x_1'\}$ is not adjacent to $x_2'$ and by considering the $K_4$ created if $a_4v$ were added we see that $v$ is adjacent to $x_3$.
Since $v$ was arbitrary all vertices in $X_1 \setminus \{x_1,x_1'\}$ are adjacent to $x_3$.
This proves the lemma for this case.
\medskip

If instead $a_4$ is adjacent to $x_1'$ then as $a_4$ is of degree 5 and is adjacent to $x_1,x_1',x_2,x_3$, and $x_3'$ these are all of its neighbours.
Any vertex in $X_1 \setminus \{x_1,x_1'\}$ is non-adjacent to $a_4$ and so must be adjacent to both ends of some edge in $N(a_4)$.
This edge must be $x_2x_3$ and so all vertices in $X_1 \setminus \{x_1,x_1'\}$ are adjacent to $x_3$.
Similarly vertices in $X_3 \setminus \{x_3,x_3'\}$ are non-adjacent to $a_4$ and so must be adjacent to $x_1$.
All vertices in $X_2 \setminus x_2$ are non-adjacent to $a_4$ and hence must be adjacent to an edge in $N(a_4)$ implying each vertex in $X_2 \setminus x_2$ is adjacent to at least one of $x_1$ or $x_3$.
\medskip

Finally we consider the case where $a_2$ is adjacent to both $x_1$ and $x_3$.
We can assume all degree 5 vertices in $X_4$ are adjacent to both $x_1'$ and $x_3'$ or we would be in a situation symmetric to the last case we considered.
Let $a_4$ be such a degree 5 vertex in $X_4$.
Since all vertices in $X_1 \setminus \{x_1,x_1'\}$ and $X_3 \setminus \{x_3,x_3'\}$ are not adjacent to either $a_2$ or $a_4$ they must be adjacent to both ends of an edge in $N(a_2)$ and both ends of an edge in $N(a_4)$.  
This implies that vertices in $X_1 \setminus \{x_1,x_1'\}$ are adjacent to $x_3$ and $x_3'$ and that vertices in $X_3 \setminus \{x_3,x_3'\}$ are adjacent to $x_1$ and $x_1'$.
Similarly we see that vertices in $X_4 \setminus x_4$ are non-adjacent to $a_2$ and hence must be adjacent to an edge in $N(a_2)$.
Therefore all vertices in $X_4 \setminus x_4$ are adjacent to one of $x_1'$ or $x_3'$.
Similarly all vertices in $X_2 \setminus x_2$ are adjacent to one of $x_1$ or $x_3$.
This also shows that at least one of the edges $x_1x_3'$ or $x_1'x_3$ exists.
If one of them is not present, say $x_1x_3' \notin E(G)$ then by saturation there is some adjacent pair $b_2\in X_2\setminus x_2$, $b_4\in X_4 \setminus x_4$ which are both adjacent to $x_1$ and $x_3'$.
We also know, however, that $b_2$ and $b_4$ are both adjacent to $x_1'$ and $x_3$ but this gives a $K_4$ on $x_1',b_2,x_3,b_4$.
Therefore both $x_1x_3'$ and $x_1'x_3$ exist.
\end{claimproof}
This completes the proof.
\end{proof}

\section{Saturation Numbers of Paths and Stars}\label{Section:PathsStars}

We begin this section by determining the partite saturation numbers of stars on at least three vertices.
\begin{lemma}\label{Lem:CutVertex}
For any $r \geq 2$, $n \in \NN$ and any connected graph $H$ which contains a vertex $v$ such that $H\setminus v$ has $r$ components we have $\satp(H,H[n]) \geq (r-1)n^2$.
\end{lemma}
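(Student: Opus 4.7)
My plan is to count edges of $G$ incident to the part $X_v$ of $H[n]$ corresponding to $v$, and to show each $v_0\in X_v$ has degree at least $(r-1)n$, so that $|E(G)|\ge n\cdot(r-1)n=(r-1)n^2$. Let $C_1,\dots,C_r$ be the components of $H\setminus v$, and set $N_i:=N_H(v)\cap V(C_i)$; connectivity of $H$ forces $|N_i|\ge 1$ for every $i$. The parts of $H[n]$ adjacent to $X_v$ are exactly those indexed by $N_1\cup\cdots\cup N_r$, and they split into $r$ pairwise disjoint groups by which component of $H\setminus v$ they come from.

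For each $v_0\in X_v$ and each $i$, let $f_i(v_0)=1$ iff $G$ contains a partite copy of $C_i$ inside $C_i[n]$ whose vertices in the parts indexed by $N_i$ all lie in $N_G(v_0)$. A partite copy of $H$ through $v_0$ is exactly the event $f_i(v_0)=1$ for every $i$, so partite-$H$-freeness gives, for each $v_0$, at least one $i$ with $f_i(v_0)=0$. The main claim I would prove is that whenever $v_0x\notin E(G)$ with $x$ in a part indexed by some $a\in N_i$, one has $f_j(v_0)=1$ for every $j\ne i$: by saturation there is a partite $H$ in $G+v_0x$ using the new edge, so $v_0$ plays the role of $v$ and $x$ the role of $a$; the restriction of this copy to each $C_j$ with $j\ne i$ is a partite $C_j$ sitting entirely in $G$, and the disjointness $N_i\cap N_j=\emptyset$ (forced by $C_i,C_j$ being distinct components of $H\setminus v$) ensures that the $N_j$-boundary of that partite $C_j$ already lies in $N_G(v_0)$.

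The two facts combine to show that for each $v_0$ there is at most one index $i(v_0)$ at which $v_0$ has any non-neighbour in the $N_{i(v_0)}$-parts: if two such indices $i\ne i'$ existed, the key claim applied to each would force $f_j(v_0)=1$ for every $j$ and hence a partite $H$ in $G$ itself. Consequently, for each of the other $r-1$ indices $j$ the vertex $v_0$ is completely joined to every part indexed by $N_j$, contributing at least $|N_j|\cdot n\ge n$ incident edges per such $j$; so $\deg_G(v_0)\ge(r-1)n$, and summing over the $n$ vertices of $X_v$ (whose incident edges share no endpoint in $X_v$) gives $|E(G)|\ge(r-1)n^2$. The main obstacle is proving the key claim cleanly: the work is in verifying that the new partite $H$ in $G+v_0x$ really does decompose into partite $C_j$'s whose $N_j$-boundaries already sit in $N_G(v_0)$ for $j\ne i$, and this in turn rests entirely on the structural observation that $v_0x$ touches an $N_i$-part and no $N_j$-part with $j\ne i$.
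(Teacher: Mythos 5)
Your proposal is correct and follows essentially the same route as the paper: show every vertex in the part $X_v$ of the cut-vertex has degree at least $(r-1)n$ by arguing that non-adjacency to vertices in two different component-boundary classes would, via saturation, yield partial partite copies of $H$ (your $f_j=1$ statements, the paper's copies of $H\setminus H_i$) whose union is a forbidden partite $H$ through that vertex. The only cosmetic difference is that you track non-neighbours over all of $N_i$ rather than fixing one representative neighbour per component, but the key claim and the final degree count are identical in substance.
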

\begin{theorem}\label{Thm:StarSat}
For any $r \geq 2$ and $n \in \NN$ all $(K_{1,r},K_{1,r}[n])$-partite-saturated graphs have exactly $(r-1)n^2$ edges.
\end{theorem}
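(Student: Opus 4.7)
The plan is to identify the host structure, then to analyse the saturated graph one vertex at a time. The host $K_{1,r}[n]$ has a distinguished \emph{centre part} $C$ of size $n$ (corresponding to the centre of $K_{1,r}$) and $r$ \emph{leaf parts} $L_1,\dots,L_r$ of size $n$; all edges of $K_{1,r}[n]$ run between $C$ and $L_1\cup\cdots\cup L_r$, and a partite copy of $K_{1,r}$ must be a star with its centre in $C$ and one leaf in each $L_i$.

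Given a $(K_{1,r},K_{1,r}[n])$-partite-saturated graph $G$, for each $c\in C$ I would examine the set $S(c):=\{i\in[r]:N_G(c)\cap L_i=\emptyset\}$. Partite-freeness immediately forces $|S(c)|\geq 1$, for otherwise $c$ would be the centre of a partite $K_{1,r}$ already present in $G$. Conversely, if $i,j\in S(c)$ were distinct, then for any $\ell\in L_i$ the non-edge $c\ell$ could not create a partite $K_{1,r}$ on addition: the only potential centre is $c$ (no edges run between distinct leaf parts in the host), yet $c$ would still miss $L_j$. Hence $|S(c)|=1$; write $s(c)$ for its unique element.

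The same idea pushed one step further shows that $c$ is adjacent to \emph{every} vertex of $L_j$ whenever $j\neq s(c)$: if some non-neighbour $\ell\in L_j$ existed, then adding $c\ell$ would still fail to complete a partite star at $c$, since $L_{s(c)}$ remains neighbourless. This pins down $\deg_G(c)=(r-1)n$ exactly, so
\begin{equation*}
e(G)=\sum_{c\in C}\deg_G(c)=n\cdot(r-1)n=(r-1)n^2\,.
\end{equation*}
The matching lower bound is Lemma~\ref{Lem:CutVertex} applied with $v$ equal to the centre of $K_{1,r}$ (whose removal leaves $r$ components), so every saturated graph attains precisely this common value. I don't anticipate any real obstacle here: the whole argument hinges on the single observation that $|S(c)|=1$ for each $c\in C$, after which the degree of every centre vertex, and therefore the edge count, is completely forced.
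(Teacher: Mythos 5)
Your argument is correct and is essentially the paper's proof: showing each centre vertex has exactly one leaf part with no neighbour, and is fully adjacent to the remaining $r-1$ leaf parts, is exactly what the paper's pigeonhole upper bound (degree $\leq (r-1)n$) together with Lemma~\ref{Lem:CutVertex} specialised to the star (degree $\geq (r-1)n$) deliver. The only difference is cosmetic — you inline the cut-vertex saturation argument rather than invoking the lemma abstractly, but the central observation that two ``missing'' leaf parts is incompatible with saturation is the same.
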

We show how Theorem~\ref{Thm:StarSat} follows from Lemma~\ref{Lem:CutVertex} before proving Lemma~\ref{Lem:CutVertex} itself.
\begin{proof}[Proof of Theorem~\ref{Thm:StarSat}]
The star $K_{1,r}$ has a vertex $v$ such that $K_{1,r} \setminus v$ has $r$ connected components and hence $\satp(K_{1,r},K_{1,r}[n]) \geq (r-1)n^2$.  
For any $(K_{1,r},K_{1,r}[n])$-partite-saturated graph $G$ any vertex in the part corresponding to the centre of the star must have degree at most $(r-1)n$ or by the pigeonhole principle it would have a neighbour in each remaining part giving a partite copy of $K_{1,r}$.  
This maximum degree condition implies at most $(r-1)n^2$ edges.
\end{proof}
\begin{proof}[Proof of Lemma~\ref{Lem:CutVertex}]
Let $v_1$ be the cut-vertex of $H$ and let $v_2,\ldots,v_{r+1}$ be neighbours of $v_1$ which are in distinct components of $H\setminus \{v_1\}$.
Let $X_i$ denote the part of $H[n]$ corresponding to $v_i$ and let $H_i$ denote the component of $v_i$ in $H \setminus \{v_1\}$.
Consider a $(K_{1,r},K_{1,r}[n])$-partite-saturated graph $G$ and an arbitrary vertex $x_1 \in X_1$.
If $x_1$ has fewer than $(r-1)n$ neighbours then there are two parts, say $X_2$ and $X_3$, such that each has a vertex non-adjacent to $x_1$.  
Call these vertices $x_2$ and $x_3$.
Since $G$ is saturated adding the edge $x_1 x_2$ must create a copy of $H$ using $x_1$ and hence there must be a copy of $H \setminus H_2$ in $G$ using $x_1$.
Similarly adding the edge $x_1 x_3$ must create a copy of $H$ implying the existence of a copy of $H \setminus H_3$ at $x_1$.
The union of these two subgraphs contains a partite copy of $H$ which contradicts $G$ being $H$-free.
Hence each vertex in $X_1$ has at least $(r-1)n$ neighbours and so $G$ has at least $(r-1)n^2$ edges.
\end{proof}

We now determine the partite saturation numbers of paths on at least $4$ vertices.
\begin{theorem}\label{Thm:PathSat}
For any $r \geq 4$ and $n \geq 2r$ we have the following.
\begin{equation}\label{Eq:PathSatThm}
\satp(P_r,P_r[n])= \begin{cases} (\frac{r}{2}-1)n^2 +(r-2)n + 3-r \text{, for $r$ even} \\ (\frac{r}{2}- \frac{1}{2})n^2 +(r-4)n +5-r \text{, for $r$ odd} \end{cases}
\end{equation}
\end{theorem}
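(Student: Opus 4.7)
The plan is to attack upper and lower bounds through a common parametrisation of partite-saturated subgraphs of $P_r[n]$ via \emph{left-good} and \emph{right-good} vertices. Labelling the parts as $X_1, \dots, X_r$ in correspondence with $v_1, \dots, v_r$, call $x \in X_i$ left-good if there is a partite path $x_1 \cdots x_{i-1} x$ in $G$, and right-good if there is a partite path $x x_{i+1} \cdots x_r$. Let $A_i, B_i$ denote these two sets within $X_i$. Since $G$ is partite $P_r$-free, $A_i \cap B_i = \emptyset$. Saturation then yields a clean characterisation of the non-edges of $G$: for $x \in X_i$, $y \in X_{i+1}$, we have $xy \notin G$ if and only if $x \in A_i$ and $y \in B_{i+1}$. (The forward direction is because an added partite $P_r$ using $xy$ decomposes into a left-good path at $x$ and a right-good path at $y$; the converse because otherwise we would already have a partite $P_r$.) Hence
\begin{equation*}
e(G) = (r-1)n^2 - \sum_{i=1}^{r-1} |A_i|\,|B_{i+1}|.
\end{equation*}
A short induction from this characterisation shows that with $\ell = \max\{i : A_i \neq \emptyset\}$ and $m = \min\{i : B_i \neq \emptyset\}$, we have $|A_i| = n$ for $i < m$, $|B_i| = n$ for $i > \ell$, and $A_i$ and $B_i$ form a non-trivial partition of $X_i$ for $m \le i \le \ell$; the degenerate case $m > \ell$ is easily ruled out as it forces $\sum_i |A_i||B_{i+1}| \le n^2$, well below the construction value.

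For the upper-bound construction, I would take $m = 2$, $\ell = r - 1$, and choose sizes $|A_i|$ alternating $1, n-1, 1, n-1, \dots$ starting from $|A_2| = 1$ up through $i = r-1$. This matches both boundaries precisely when $r$ is even; for odd $r$ I would additionally override $|A_{r-1}| = n-1$. Defining $G$ as $P_r[n]$ with the edges in each $A_i \times B_{i+1}$ removed, a direct count recovers the right-hand side of~\eqref{Eq:PathSatThm}. The graph is partite $P_r$-free because every attempt to build a partite $P_r$ step by step must at some point traverse a removed $A_i \to B_{i+1}$ edge, and saturation holds because each removed edge $xy$ lies on a partite $P_r$ completed via the left-good structure to the left of $x$ and the right-good structure to the right of $y$.

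For the matching lower bound, write $a_i = |A_i|$, set $b_i = n - a_i$ in the mixed range, and note that
\begin{equation*}
\sum_{i=1}^{r-1} |A_i|\,|B_{i+1}| = n(n - a_m) + \sum_{i=m}^{\ell-1} a_i (n - a_{i+1}) + n\,a_\ell,
\end{equation*}
to be maximised over admissible $m \le \ell$ and $a_i \in [1, n-1]$. Since the objective is affine in each $a_i$ with the others held fixed, an optimiser may be taken with all $a_i \in \{1, n-1\}$, reducing to a finite discrete optimisation. A local exchange argument then shows that $\ell - m = r - 3$ is optimal, that alternating $a_i$ maximises the interior sum, and that the two boundary terms are each maximised at $n(n-1)$ by $a_m = 1$ and $a_\ell = n-1$. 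These boundary choices are consistent with the interior alternation precisely when $r$ is even; when $r$ is odd, a parity obstruction forces a single local deviation at one end, losing exactly $n - 2$ compared to the ideal value and accounting for the shift $(r-4)n + (5-r)$ versus $(r-2)n + (3-r)$ in the formula.

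The main obstacle will be this parity-dependent optimisation. Although each individual step is elementary, the nearest-neighbour coupling in the quadratic objective makes naive greedy arguments fail — already for $r = 5$ the unique optimum is not strictly alternating — and a careful exchange argument at the endpoints is required.
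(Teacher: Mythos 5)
Your proposal is essentially the same argument as the paper's, repackaged in slightly more symmetric notation. Your left-good sets $A_i$ are the paper's reachability sets $N_i$, and your right-good sets $B_i$ coincide (in the non-degenerate regime) with the paper's complements $\overline{N_i}$; the characterisation that the non-edges between $X_i$ and $X_{i+1}$ are precisely $A_i\times B_{i+1}$ is the content of the paper's observation that the pairs $(N_i,N_{i+1})$, $(\overline{N_i},X_{i+1})$, $(X_1,N_2)$ and $(\overline{N_{r-1}},X_r)$ are all complete. From there the edge count and the alternating $\{1,n-1\}$ optimisation are the same. The one genuine difference is in how the degenerate configurations ($B_2=\emptyset$, i.e.\ $N_i=X_i$; or $A_{r-1}=\emptyset$, i.e.\ $N_{r-1}=\emptyset$) are disposed of: the paper peels off a complete bipartite pair and applies an induction on $r$ using the bound $\satp(P_{r-1},P_{r-1}[n])\ge(\frac{r-1}{2}-1)n^2$, whereas you fold them into a single optimisation over $m,\ell$ and argue by an extension move that $m=2$, $\ell=r-1$ is optimal. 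Your route avoids the secondary induction, at the cost of a somewhat more delicate exchange argument, which you leave sketched; the paper's greedy left-to-right assignment has a similar implicit gap (it produces \emph{a} minimiser, not \emph{the} minimiser, which is all that is needed). One small factual slip: for $r=5$ there are two minimising configurations, $(1,1,n-1)$ and $(1,n-1,n-1)$, so there is no \emph{unique} optimum, but this does not affect your argument since both exhibit precisely the parity deviation you describe.
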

\begin{proof}
Let $X_1,\ldots,X_r$ be the parts of $P_r[n]$ with $X_i$ adjacent to $X_{i+1}$ for each $i$.

We first give an upper bound construction.
Given subsets $A_i \subset X_i$ define the graph $G$ on $\bigcup_i X_i$ to be the graph with precisely the edges that lie in $(A_i,A_{i+1})$ or $(X_i \setminus A_i,X_{i+1})$ for some $i \leq r-1$.
For the upper bound if $r$ is even consider the graph $G$ created as above with $A_1:=X_1$, $A_r:= \emptyset$, $|A_i|=1$ for all even $i\leq r-2$ and $|A_i|=n-1$ for all odd $3\leq i\leq r-1$.
If $r$ is odd consider the construction $G$ given as above but with the $A_i$ satisfying $A_1:=X_1$, $A_r= \emptyset$, $|A_{r-1}|=n-1$, $|A_i|=1$ for all even $i\leq r-3$ and $|A_i|=n-1$ for all odd $3\leq i\leq r-2$.

For the lower bound we assume that for some $r\geq 4$ and some $n \geq 2r$ equation~\eqref{Eq:PathSatThm} does not hold. 
Then consider the least such $r$ and some $n \geq 2r$ for which \eqref{Eq:PathSatThm} fails.
In particular by this minimality and Theorem~\ref{Thm:StarSat} (which gives the partite saturation of $K_{1,2}=P_3$) we see that
\begin{equation}\label{Eq:PathSatInd}
\satp(P_{r-1},P_{r-1}[n])\geq \big(\tfrac{r-1}{2}-1\big)n^2 \,.
\end{equation} 
Now consider a $(P_r,P_r[n])$-partite-saturated graph $G$ on $X_1\cup \cdots \cup X_r$.
Let $N_2$ denote the set of vertices in $X_2$ which are adjacent to at least one vertex of $X_1$.
For each $i\geq 3$ let $N_i$ denote the set of vertices of $X_i$ which are adjacent to at least one vertex of $N_{i-1}$.
Since there can be no partite path on $r$ vertices it must be the case that $N_r = \emptyset$.
If $N_{r-1} = \emptyset$ then $(X_{r-1},X_r)$ must be complete in $G$ as adding an edge to this pair cannot create a partite copy of $P_r$.  
If $(X_{r-1},X_r)$ is complete then $X_1 \cup \cdots \cup X_{r-1}$ is $(P_{r-1},P_{r-1}[n])$-partite-saturated so by \eqref{Eq:PathSatInd} there are at least $\frac{r}{2}n^2$ edges in $G$.  This is at least as many as required.
Therefore we may assume $N_i \neq \emptyset$ for all $2 \leq i \leq r-1$.
If $N_i = X_i$ for some $i\geq 2$ then the pairs $(X_j,X_{j+1})$ are complete for all $1 \leq j \leq i-1$.
Then $X_i \cup \cdots \cup X_r$ is $(P_{r-i+1},P_{r-i+1}[n])$-partite-saturated so by \eqref{Eq:PathSatInd} there are at least $(\frac{r-1}{2})n^2$ edges in $G$.  
This is at least as many as required.
We now assume $N_i \neq X_i$ for all $2 \leq i \leq r$ so for all $i=2,\ldots,r-1$ we have $1 \leq |N_i| \leq n-1$.
For each $i \geq 2$ let $\overline{N_i}$ denote $X_i \setminus N_i$.
We observe that $(X_1,N_1)$ and $(\overline{N_{r-1}},X_r)$ must be complete.
As are $(N_i,N_{i+1})$ and $(\overline{N_i} ,X_{i+1})$ for $2 \leq i \leq r-2$ because adding edges to either of these pairs cannot create a partite copy of $P_r$.
Therefore we find that $G$ has all possible edges except those in pairs $(X_1,\overline{N_{2}})$ or $(N_i, \overline{N_{i+1}})$ for $2 \leq i \leq r-1$ and so $e(G)$ is at least
\begin{equation}\label{Eq:PathSat1}
(r-1)n^2 - n|\overline{N_{2}}|- \sum_{i=2}^{r-1} |N_i||\overline{N_{i+1}}|=(r-2)n^2 + n|N_2| -n\sum_{i=2}^{r-1}|N_i| +\sum_{i=2}^{r-2}|N_i||N_{i+1}|\,.
\end{equation}
Suppose $N_2,...,N_{r-1}$ have been chosen to minimise the above expression under the assumption that each $|N_i|$ is between $1$ and $n-1$.
The contribution to \eqref{Eq:PathSat1} from terms that include $N_2$ is exactly $|N_2||N_3|$ which (regardless of the value of $|N_3|$) is minimised by taking $|N_2|=1$.
For $3 \leq i \leq r-2$ the contribution to \eqref{Eq:PathSat1} from terms that include $N_i$ is
\begin{equation*}
|N_i|\big(|N_{i-1}|+|N_{i+1}|-n\big)\,.
\end{equation*}
When $|N_{i-1}|=1$ the above expression is at most zero and so minimised by taking $|N_i|=n-1$.
If $|N_{i-1}|=n-1$ it is at least zero and so minimised by taking $|N_i|=1$.
In this way using $|N_2|=1$ we can see that for $2 \leq i \leq r-2$ we have the following.
\begin{equation*}
|N_i|= \begin{cases} 1 \text{, for $i$ even} \\ n-1 \text{, for $i$ odd} \end{cases}
\end{equation*}
The contribution to \eqref{Eq:PathSat1} from the $N_{r-1}$ terms is $|N_{r-1}|\big(|N_{r-2}|-n \big)$ which is always negative and so the expression is minimised when $|N_{r-1}|=n-1$.
The graph given with the $N_i$ taking these sizes is the same as our upper bound construction completing the proof.
\end{proof}

\section{$2$-Connectivity and the Growth of Saturation Numbers}\label{Section:2-connectivity}

Recall that a graph is \emph{$2$-connected} if after the removal of any single vertex it is still connected.
Observe that if $H'$ can be obtained from $H$ by adding or removing isolated vertices then $\satp(H,H[n])=\satp(H',H'[n])$.
It is also clear that $\satp(K_2,K_2[n])=0$.
\begin{theorem}\label{Thm:2-connectivity}
For any graph $H$ with $e(H) \geq 2$ and no isolated vertices, if $H$ is $2$-connected then $\satp(H,H[n])= \Theta(n)$ and if $H$ is not $2$-connected then $\satp(H,H[n])= \Theta(n^2)$.
\end{theorem}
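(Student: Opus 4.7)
I would split the proof into the 2-connected case and the non-2-connected case. For the 2-connected case, $\satp(H,H[n])=\Theta(n)$, the lower bound $\Omega(n)$ follows from a minimum-degree argument. A 2-connected graph with at least two edges has minimum degree at least $2$, so in any $(H,H[n])$-partite-saturated $G$ every vertex has degree at least $1$: if $y\in X_i$ had $\deg_G(y)=0$, then choosing any $j$ with $v_iv_j\in E(H)$ and any $y'\in X_j$, adding $yy'$ would have to create a partite $H$-copy in which $y$ plays the role of $v_i$, requiring $y$ to have at least $\deg_H(v_i)-1\ge 1$ further neighbours, a contradiction. Summing gives $e(G)\ge |V(H)|n/2=\Omega(n)$. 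The matching upper bound $O(n)$ is by explicit construction: fix an edge $e=v_1v_2\in E(H)$, pick a single pivot $x_i\in X_i$ in each part, place a copy of $H-e$ on the pivots, and attach each non-pivot $y\in X_i$ to a bounded-size subset of pivots chosen so that (a) no partite $H$ is yet present---achieved by keeping the pair $(X_1,X_2)$ edge-free---and (b) the addition of any missing edge completes a partite $H$-copy via an alternative embedding. The 2-connectedness of $H$ enters precisely here: it guarantees $H-e$ is still connected, so when an edge is added in a pair other than $(X_1,X_2)$ the required $v_1$-to-$v_2$ link can be re-routed through other pivots rather than via the missing $x_1x_2$ edge. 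This yields $O(1)$ edges on the pivots plus $O(1)$ per non-pivot, totalling $O(n)$.

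For the non-2-connected case, $\satp(H,H[n])=\Theta(n^2)$, the upper bound $\satp(H,H[n])\le e(H)n^2$ is trivial (take $H[n]$ with the edges of one pair removed). For the lower bound I would split into two subcases. If $H$ is connected with a cut vertex $v$ such that $H-v$ has $r\ge 2$ components, Lemma~\ref{Lem:CutVertex} immediately yields $\satp(H,H[n])\ge(r-1)n^2\ge n^2$. If $H$ is disconnected, write $H=H_1\sqcup H_2$ where each $H_i$ is a non-empty union of components of $H$; since $e(H)\ge 2$ and $H$ has no isolated vertices, each $H_i$ contains at least one edge. Because $H[n]$ itself has no edges between the $V(H_1)$-parts and the $V(H_2)$-parts, any partite $H$-copy in $G$ decomposes as a partite $H_1$-copy on the $V(H_1)$-parts together with a partite $H_2$-copy on the $V(H_2)$-parts. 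Let $G$ be partite-$H$-saturated; since $G$ contains no partite $H$, at least one side is missing its partite copy, and without loss of generality $G$ has no partite $H_1$ on the $V(H_1)$-parts. Then adding any missing edge in the $V(H_2)$-parts would need to create a partite $H$, hence a partite $H_1$ on the $V(H_1)$-parts; but a $V(H_2)$-edge leaves these parts unchanged, a contradiction. Therefore $G$ has no missing $V(H_2)$-edges, i.e., $G$ contains all of $H_2[n]$, giving $e(G)\ge e(H_2)n^2\ge n^2$.

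The main obstacle is the explicit $O(n)$-edge construction for the 2-connected upper bound. The pivot-plus-local-structure skeleton is clear in spirit and is witnessed by Proposition~\ref{Cl:Construction} for $K_4$, but to execute it for a general 2-connected $H$ one must specify the non-pivot neighbourhoods so that partite-$H$-freeness and full saturation hold simultaneously, which forces a case analysis that exploits the connectedness of $H-e$ to provide alternative embeddings for every missing edge outside $(X_1,X_2)$.
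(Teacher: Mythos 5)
Your lower bound arguments are correct and coincide with the paper's: the minimum-degree-$1$ argument for the $2$-connected case, the appeal to Lemma~\ref{Lem:CutVertex} for a cut vertex, and the observation that when $H$ is disconnected saturation forces at least one of the two component blow-ups to be complete. The trivial $O(n^2)$ upper bound is also fine.

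The genuine gap is exactly the one you flag yourself: the $O(n)$ upper bound for $2$-connected $H$. Your ``single pivot set carrying $H-e$'' sketch does not give a route to a bound on $e(G_3)$ after saturating, because nothing in that setup prevents the saturation process from adding $\Omega(n^2)$ edges among non-pivots. The connectedness of $H-e$ tells you that \emph{some} alternative embeddings exist, but it does not automatically make every missing non-pivot–non-pivot edge of $H[n]$ complete a partite $H$; unless every such edge is already ``dangerous'', the arbitrary closure to a saturated graph may grow quadratically. The paper's construction fixes precisely this: instead of one pivot configuration, it places a disjoint bounded set $G_1$ consisting of one shattered copy $H_{ij}$ for \emph{each} edge $ij\in E(H)$ (where $H_{ij}$ is $H$ with all edges incident to $i$ or $j$ deleted), and attaches every $x\in X_i\setminus V(G_1)$ to the $V_i(H_{ij})$-vertices of every such $H_{ij}$. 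This guarantees that for every edge $ij$ of $H$ and every pair $x_i\in X_i\setminus V(G_1)$, $x_j\in X_j\setminus V(G_1)$, adding $x_ix_j$ immediately completes a copy of $H$ through $H_{ij}$. Hence in any saturated extension $G_3$, every edge has at least one endpoint in the $O(1)$-sized set $V(G_1)$, which bounds $e(G_3)$ by $e(H[n])-e(H[n-e(H)])=O(n)$. The $2$-connectedness of $H$ is then used not to re-route paths when an edge is added, but to show that the intermediate graph $G_2$ is partite-$H$-free: a hypothetical partite copy would have at least one vertex outside $V(G_1)$, and $2$-connectedness forces a separating set of size at least two, but only one vertex outside $V(G_1)$ can see into a given shattered copy. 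You would need to discover this ``one $H_{ij}$ per edge'' idea (or another device that globally forbids edges between non-pivots) to close the gap.
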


\begin{proof}
If $H$ is connected but not $2$-connected then there must be a cut vertex, $v$, of $H$ such that $H \setminus v$ has at least two components.  Then by Lemma~\ref{Lem:CutVertex} we have $\satp(H,H[n]) \geq n^2$.
\smallskip

We now consider the case when $H$ is disconnected but has no isolated vertices. 
Let $H_1$ and $H_2$ be two connected components of $H$.
$G \subset H[n]$ is $(H,H[n])$-partite-saturated then by saturation the induced graph of $G$ onto at least one of $H_1[n]$ or $H_2[n]$ must be complete.
Since each $H_i$ contains an edge this means $G$ has at least $n^2$ edges.
\smallskip

Finally we consider the case when $H$ is $2$-connected.  
The fact that $\satp(H,H[n])= \Omega(n)$ comes from the fact that in an $(H,H[n])$-saturated graph $G$ every vertex, $x$, has degree at least one.  If not adding an edge incident to $x$ would not create a copy of $H$ since $H$ has minimum degree at least two by 2-connectivity.

We now give an upper bound construction.  
For each edge $ij$ of $H$ we define $H_{ij}$ to be the graph obtained from $H$ be removing all edges incident to $i$ or $j$ including the edge $ij$.  
We define $V_i(H_{ij})$ to be the vertices of $H_{ij} \setminus \{i,j\}$ which were incident to $i$ in $H$.
Similarly $V_j(H_{ij})$.  
For $n \geq e(H)$ we let $G_1 \subset H[n]$ be the disjoint union of a copy of $H_{ij}$ for each edge $ij$ of $H$.  
Create $G_2$ from $G_1$ by adjoining each vertex of $V_i(H_{ij})$ (in the copy of $H_{ij}$ in $G_1$) to every vertex in $X_i \setminus V(G_1)$, and by adjoining each vertex of $V_j(H_{ij})$ to every vertex in $X_j \setminus V(G_1)$ for each edge $ij$of $H$.  
We then create $G_3$ from $G_2$ by arbitrarily adding edges until the graph is $(H,H[n])$-partite-saturated.

We claim that $G_3$ is $(H,H[n])$-partite-saturated and has at most $2e(H)^2 n - e(H)^3$ edges.  
To prove this it is sufficient to show that $G_2$ has no partite copy of $H$ and that $G_3$ has at most $2e(H)^2 n - e(H)^3$ edges.
We first note that there are no edges of $G_2$ or $G_3$ with both end points in $V(G_3) \setminus V(G_1)$ since any such edge $x_ix_j$ would form a copy of $H$ with the $H_{ij}$.  
We can then bound the number of edges of $G_3$ by $E(H[n]) - E(H[n-e(H)]) = n^2 e(H) - (n- e(H))^2 e(H) =2e(H)^2 n - e(H)^3$.

Suppose now for contradiction that $G_2$ has a partite copy of $H$.  
Denote the vertices of this copy of $H$ by $x_i$ for $i=1,\ldots,|H|$.  
Since $G_1$ is $H$-free at least one of the $x_i$'s lies in $V(G_2) \setminus V(G_1)$.  
Suppose without loss of generality that $x_1 \notin V(G_1)$.  
Let $x_2$ be a neighbour of $x_1$ in the partite copy of $H$.  
Since there are no edges with both end points in $V(G_2) \setminus V(G_1)$ it must be the case that $x_2 \in V(G_1)$.
Since $x_1x_2$ is an edge of $G_2$ it must be the case that $x_2 \in V_1(H_{1i})$ for some $i$ adjacent to 1 in $H$.
Suppose $x_2 \in V_1(H_{13})$.
Then similarly $x_3 \in V_1(H_{1k})$ for some $k \neq 3$.
Therefore $x_2$ and $x_3$ are in different $H_{ij}$'s and hence different connected components of $G_1$.
Our copy of $H$ is separated by following the set 
\begin{equation*}
\{x_i: x_i \notin V(G_1)\text{ and $x_i$ is adjacent to a vertex in $H_{13}$}\}\,.
\end{equation*}
Since $H$ is $2$-connected this set must contain at least two vertices, one of which is $x_1$.
The only $x_i$'s that vertices in $H_{13}$ can be adjacent to outside of $H_{13}$ are $x_1$ and $x_3$ but $x_3 \in V(G_1)$ which gives a contradiction.
\end{proof}

\section{Extra-Saturation Numbers}\label{Section:ExSat}

In this section we determine the partite extra-saturation numbers of cliques and trees, and show that of graphs on $r$ vertices the cliques have the largest partite extra-saturation numbers.

Since it follows from the proof of $\sat(K_3,K_3[n])=6n-6$ in~\cite{FerJacPfeWen} that $\exsatp(K_3,K_3[n])=6n-6$ we look only at cliques on at least 4 vertices.
The proof of the following Theorem uses ideas from~\cite{FerJacPfeWen}.

\begin{theorem}\label{Thm:CliqueExSat}
For any integer $r\geq 4$ and all large enough $n \in \NN$ we have 
\begin{equation*}
\exsatp(K_r,K_r[n])= (2n-1)\binom{r}{2}\,.
\end{equation*}
\end{theorem}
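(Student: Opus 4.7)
The plan is to prove matching upper and lower bounds of $(2n-1)\binom{r}{2}$.

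For the \textbf{upper bound}, I would exhibit the construction in which we fix one vertex $z_i \in X_i$ for each part and take $G$ to be the subgraph of $K_r[n]$ consisting of every edge with at least one endpoint in $Z = \{z_1, \ldots, z_r\}$. A direct count gives $\binom{r}{2}$ edges within $Z$ together with $r(r-1)(n-1)$ edges between $Z$ and its complement, totalling $(2n-1)\binom{r}{2}$ edges. To verify extra-saturation, note that every non-edge $uv$ of $G$ has $u \in X_i \setminus \{z_i\}$ and $v \in X_j \setminus \{z_j\}$ for some distinct parts $i, j$, and the set $\{u, v\} \cup \{z_k : k \neq i, j\}$ is a partite $K_r$ in $G \cup \{uv\}$ (every edge other than $uv$ is incident to some $z_k$, hence in $G$) that is absent from $G$.

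For the \textbf{lower bound}, take any partite-extra-saturated $G$. The plan is to show that for every pair of parts $(X_i, X_j)$ one has $|G_{ij}| \geq 2n-1$, where $G_{ij}$ is the bipartite subgraph induced between $X_i$ and $X_j$; summing over all $\binom{r}{2}$ pairs yields the desired bound. To prove the per-pair inequality I argue by contradiction: suppose $|G_{12}| \leq 2n-2$. Since $r \geq 4$, choose two parts $X_k, X_l$ disjoint from $\{X_1, X_2\}$. Every non-edge $(u, v) \in X_k \times X_l$ with $uv \notin G$ needs a witness partite $K_{r-2}$ on the other $r-2$ parts, and since both $X_1, X_2$ lie among these parts this witness must include some edge of $G_{12}$. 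A given edge $(w_1, w_2) \in G_{12}$ can witness only those non-edges $(u, v)$ satisfying $u \in N(w_1) \cap N(w_2) \cap X_k$ and $v \in N(w_1) \cap N(w_2) \cap X_l$. An averaging argument extracts a vertex in $X_1 \cup X_2$ of low $G_{12}$-degree, and by tracing the common-neighbourhood constraints imposed on the few available witness edges, I would exhibit some non-edge in $\bar{G}_{kl}$ whose only candidate witness edge in $G_{12}$ has been forbidden by the assumption $|G_{12}| \leq 2n-2$, contradicting extra-saturation.

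The \textbf{main obstacle} is making this combinatorial argument sharp, especially for $r \geq 5$ where a witness is a full partite $K_{r-2}$ rather than a single edge, forcing simultaneous clique-type constraints across several parts. This mirrors the approach of Ferrara, Jacobson, Pfender, and Wenger in the $r = 3$ case (who obtain the per-pair bound $2(n-1)$ in the $K_3$-free saturation setting); the extra $+1$ in the per-pair bound for $r \geq 4$ comes from the fact that the $K_{r-2}$ witness itself requires clique edges among the hub-like vertices, so a single missing edge in $G_{12}$ can invalidate the witness of an entire family of non-edges in $\bar{G}_{kl}$. Managing this detailed bookkeeping of common neighbourhoods and the partite-clique condition on witnesses is the technical heart of the proof, and it is also what rules out natural candidate small modifications of the extremal construction (for instance, removing one hub--hub edge destroys the witness for every non-$z$ pair between the two other parts).
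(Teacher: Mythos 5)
Your upper bound is fine and equivalent to the paper's: a partite $K_r$ with each clique vertex joined to everything in the adjacent parts has exactly $\binom{r}{2}+r(r-1)(n-1)=(2n-1)\binom{r}{2}$ edges, and your verification of extra-saturation is correct.

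Your lower bound, however, rests on a claim that is false: it is \emph{not} true that every pair of parts must carry at least $2n-1$ edges in a partite-extra-saturated graph. Here is a counterexample (take $r=4$; it extends to all $r\ge 4$). Let $G\subset K_4[n]$ have $G_{12}$ a perfect matching $\{a_ib_i:i\in[n]\}$ and let every other bipartite pair $G_{13},G_{14},G_{23},G_{24},G_{34}$ be complete. The only non-edges of $K_4[n]$ not in $G$ are the pairs $a_ib_j$ with $i\ne j$, and adding any such $a_ib_j$ creates the new partite $K_4$ on $\{a_i,b_j,c,d\}$ for any $c\in X_3,d\in X_4$ (the five other edges are already present since all other pairs are complete). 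So $G$ is $(K_4,K_4[n])$-partite-extra-saturated, yet $e(G_{12})=n<2n-1$. Of course $e(G)$ is quadratic here, so this does not contradict the theorem, but it does contradict the per-pair lemma you want to prove; any argument built on ``assume $e(G_{12})\le 2n-2$, derive a contradiction'' cannot succeed without an additional hypothesis. The problem is structural: extra-saturation permits $G$ to already contain many partite copies of $K_r$, so a sparse pair can be fully compensated by density elsewhere, and there is no local obstruction of the kind your averaging/witness-tracing argument seeks.

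The paper's proof does not localise to pairs. It instead works with the per-part minimum degrees $\delta_1,\dots,\delta_r$, showing first that $r-1\le\delta_i<r^2$, that every degree-$(r-1)$ vertex lies in a partite $K_r$, and that (with $Y$ the union of the neighbourhoods of one minimum-degree vertex from each part) every vertex of $X_i\setminus Y_i$ has a neighbour in each $Y_j$. A global edge count then yields $e(G)\ge(2n-1)\binom{r}{2}+\tfrac{n}{2}\sum_i(\delta_i-(r-1))-O_r(1)$, forcing all $\delta_i=r-1$ for large $n$, after which $|Y|=r$ and the $\binom{r}{2}$ edges of the $K_r$ on $Y$ are added back in to close the gap. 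If you want to salvage a pairwise statement, you would at least need to restrict to graphs attaining (or nearly attaining) the minimum, and even then the routing through a global degree argument is considerably cleaner.
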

\begin{proof}
For the upper bound consider the graph $G$ consisting of a copy of $K_r$ with each vertex of this clique adjacent to all vertices in adjacent parts of $K_r[n]$.  
For the lower bound consider a $(K_r,K_r[n])$-partite-extra-saturated graph $G$ on $X_1 \cup \cdots \cup X_r$.

For all $i=1,\ldots,r$ let $\delta_i:=\min \{d(x): x \in X_i \}$.
Since for any $i$ we have $e(G) \geq \delta_i n$ we must have $\delta_i < r^2$ or $G$ would have more than $(2n-1)\binom{r}{2}$ edges.
By the fact that any vertex which is not adjacent to some part must be incident to all vertices in the other parts we see that $\delta_i \geq r-1$ for all $i$.
\begin{claim}
All vertices of degree $r-1$ are in a $K_r$.
\end{claim}
\begin{claimproof}
If $v \in X_1$ is a vertex of degree $r-1$ it must have a neighbour in each adjacent part.
Denote these by $x_i \in X_i$ for $i=2,...,r$.
For any $y_2 \in X_2 \setminus x_2$ adding the edge $vy_2$ must create a new $K_r$.
This new clique must be on $\{v,y_2,x_3,x_4,...,x_r\}$ so $x_3,...,x_r$ must all be pairwise adjacent.
Similarly for any $y_3 \in X_3 \setminus x_3$ adding the edge $vy_3$ must create a new $K_r$ (which must be $\{v,x_2,y_3,x_4,x_5,...,x_r\}$) so $x_2,x_4,x_5,...,x_r$ must all be pairwise adjacent.
This gives a $K_r$ on $v,x_2,x_3,...,x_r$.
\end{claimproof}
Let $x_i$ be a vertex of degree $\delta_i$ for each $i$.  
For each $i$ let $Y_i := \bigcup_{j\neq i} \big(N(x_j)\cap X_i\big)$ and let $Y:=\bigcup_i Y_i=\bigcup_i N(x_i)$.
Observe that $|Y| \leq r^3$.
\begin{claim}
For all $i \neq j$, each vertex in $X_i \setminus Y_i$ has a neighbour in $Y_j$.
\end{claim}
\begin{claimproof}
Given some $i \neq j$ and a vertex $v \in X_i \setminus Y_i$ consider any $k \in \{1,...,r\}\setminus \{i,j\}$.
As $v$ is not in $Y_i$ it must be that $v$ is not adjacent to $x_k$.
Therefore, by saturation, adding $vx_k$ creates a new $K_r$.
This $K_r$ must use a neighbour of $x_k$ in $X_j$ and hence this neighbour is both in $Y_j$ and also adjacent to $v$.
\end{claimproof}
We can now lower bound the edges of $G$ by
\begin{equation}\label{eq:exsat}
\begin{split}
e(G) &\geq e(Y,X\setminus Y)+e(X \setminus Y) \\
&\geq \sum_{v \in X \setminus Y}\bigg(\deg(v,Y)+\tfrac{1}{2}\big(\deg(v,X \setminus Y)\big)\bigg) \\ 
&\geq \sum_i |X_i \setminus Y|\big(r-1 + \tfrac{1}{2}\big(\delta_i -(r-1)\big) \big) \\
&= \tfrac{1}{2}(r-1)|X \setminus Y| + \tfrac{1}{2}\sum_i |X_i \setminus Y|\delta_i \\
&\geq \tfrac{1}{2}n\bigg(r(r-1)+\sum_i \delta_i \bigg) -\tfrac{1}{2}r^3\bigg(r-1 +\sum_i \delta_i\bigg) \\
&\geq \tfrac{1}{2}n\bigg(r(r-1)+\sum_i \delta_i \bigg) -r^6 \\
&= (2n-1)\binom{r}{2} + \tfrac{1}{2}n \sum_i\big(\delta_i - (r-1) \big) + \binom{r}{2}-r^6\,.
\end{split}
\end{equation}
Therefore for $n > 2r^6$ we have $\delta_i=r-1$ for all $i$.     
Each of the $x_i$'s has one neighbour in each adjacent part and is in a copy of $K_r$.
We see that by saturation for a vertex $v$ of degree $r-1$ every vertex $w$ in a different part from $v$ which is not adjacent to $v$ is incident to all neighbours of $v$ outside of the part of $w$.
Therefore vertices of degree $r-1$ are not adjacent.
We also see that for any $i\neq j$ the vertices $x_i$ and $x_j$ have $r-2$ common neighbours and so with the sets $Y_i$ and $Y$ as before we find that $|Y_i|=1$ for all $i$, so $Y=r$.  

Using \eqref{eq:exsat} we get
\begin{equation*}
\begin{split}
e(G) &\geq e(Y, X\setminus Y) + e(X\setminus Y) + e(Y) \\
&\geq \tfrac{1}{2}(r-1)|X \setminus Y| + \tfrac{1}{2}\sum_i |X_i \setminus Y|\delta_i +e(Y) \\ 
&\geq 2(n-1)\binom{r}{2} +e(Y)\,.
\end{split}
\end{equation*}
Since there is a $K_r$ on $Y$ we have $e(Y)= \binom{r}{2}$ and the result follows.  
\end{proof}

The upper bound construction can be generalised to any $H$ by letting $G$ consist of a copy of $H$ with each vertex of this $H$ adjacent to all vertices in adjacent parts of $H[n]$.  
This gives an upper bound of 
\begin{equation*}
\exsatp(H,H[n])\leq (2n-1)e(H)\,.
\end{equation*}
In particular this shows that over graphs $H$ on $r$ vertices the cliques give rise to the largest value of $\exsatp(H,H[n])$ and also that all partite extra-saturation numbers of graphs with at least two edges are linear.

Next we determine the partite extra-saturation number of trees.

\begin{theorem}\label{Thm:TreeExSat}
For any tree $T$ on at least $3$ vertices and any natural number $n\geq 4$ we have $\exsatp(T,T[n])=(|T|-1)n$
\end{theorem}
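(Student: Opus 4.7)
The plan is to handle upper and lower bounds separately. For the upper bound, I would take $G$ to be the union of $n$ vertex-disjoint partite copies of $T$: labelling $X_\ell = \{v_\ell^1, \ldots, v_\ell^n\}$, include the edge $v_i^k v_j^k$ for every $ij \in E(T)$ and $k \in [n]$, giving exactly $(|T|-1)n$ edges. To verify partite-extra-saturation, for any missing edge $v_i^a v_j^b$ with $a \ne b$ and $ij \in E(T)$, I would let $T_i, T_j$ be the two components obtained from $T$ by deleting the edge $ij$, with $i \in V(T_i)$ and $j \in V(T_j)$; then the map $\phi(\ell) = v_\ell^a$ for $\ell \in V(T_i)$, $\phi(\ell) = v_\ell^b$ for $\ell \in V(T_j)$ is a partite copy of $T$ in $G \cup \{v_i^a v_j^b\}$ whose unique new edge is $v_i^a v_j^b$.

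For the lower bound I would proceed by induction on $|T|$. The base case $T = P_3$, with central part $X_2$, reduces to showing $\deg_G(y) \ge 2$ for every $y \in X_2$: either $y$ has a non-neighbour in each of $X_1, X_3$, in which case extra-saturation applied to those non-edges forces a neighbour on each side; or $y$ is fully adjacent to one of them and so $\deg_G(y) \ge n \ge 2$. Summing over $X_2$ yields $e(G) \ge 2n$. For the inductive step with $|T| \ge 4$, pick a leaf $i$ with neighbour $j$, set $T' := T - i$, and let $G'$ be the restriction of $G$ to the parts of $T'[n]$. Because any new partite $T$ in $G \cup \{uv\}$ for $uv \in E(T'[n])$ restricts to a new partite $T'$ using $uv$, the graph $G'$ is $(T', T'[n])$-partite-extra-saturated, so by induction $e(G') \ge (|T|-2)n$. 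Defining $V_j := \{v \in X_j : v = \phi(j) \text{ for some partite copy of } T - i \text{ in } G\}$, any $v \in X_j \setminus V_j$ must be fully adjacent to $X_i$ in $G$, since otherwise a missing edge through $v$ could not extend to a new partite $T$. Hence $e(X_i, X_j) \ge n(n - |V_j|)$, and if $|V_j| \le n - 1$ we conclude $e(G) = e(G') + e(X_i, X_j) \ge (|T|-1)n$.

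The main obstacle is the remaining case $|V_j| = n$, in which $e(X_i, X_j)$ may be very small even though every $v \in X_j$ anchors a partite $T - i$ in $G$. My plan here is to exploit isolated vertices of $X_i$: if some $u \in X_i$ has $\deg_G(u) = 0$, extra-saturation of $G$ applied to all non-edges $\{uy : y \in X_j\}$, followed by the same argument for non-edges deeper in $T'$, cascades into large complete bipartite structure among the pairs $(X_j, X_k)$ for $k$ adjacent to $j$ in $T'$ (this is exactly what makes $e(X_i, X_j) = 0$ force $e(G) \ge (|T|-2)n^2$). Combined with the non-leaf degree bound $\deg_G(v) \ge \deg_T(k)$ for every $v$ in a non-leaf part $X_k$, proved by a short case analysis on the set $J_v$ of parts adjacent to $k$ in which $v$ has a non-neighbour, a parametric bookkeeping modelled on the direct computation for $T = P_4$ should produce enough forced extra edges in $G'$ to offset the deficit $n - e(X_i, X_j)$, closing the induction. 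The hypothesis $n \ge 4$ enters precisely at this final propagation step, where it guarantees that the quadratic surplus dominates the linear deficit across the relevant range of parameters.
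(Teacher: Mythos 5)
The upper bound construction (and verification) is identical to the paper's. The lower bound, however, takes a genuinely different route and has a real gap.

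The paper's lower bound is a single, non-inductive counting argument. Writing $L$ for the leaves of $T$ and $C=V(T)\setminus L$, it notes that for a vertex $x\in X_v$ with $v\in C$, either $x$ has a neighbour in each adjacent part, or else (by extra-saturation, since $x$ cannot be in any partite copy of $T$) it is adjacent to \emph{all} vertices in the $\deg_T(v)-1$ remaining adjacent parts, so $\deg_G(x)\geq n(\deg_T(v)-1)\geq 2\deg_T(v)$ once $n\geq 4$. In both cases $\deg_G(x)+\deg_G(x,L[n])\geq\deg_T(v)+\deg_T(v,L)$. Since every edge of $G$ touches $C[n]$ and edges inside $C[n]$ are counted twice from there, one gets
\[
 e(G)=\tfrac12\sum_{x\in C[n]}\bigl(\deg_G(x)+\deg_G(x,L[n])\bigr)\geq\tfrac{n}{2}\sum_{v\in C}\bigl(\deg_T(v)+\deg_T(v,L)\bigr)=n\,e(T)\,,
\]
which is the desired bound with no induction and no case distinction across subtrees.

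Your induction step is where the problem sits, and you have correctly identified it yourself. Deleting a leaf $i$ (with neighbour $j$), the restriction $G'$ to $T'[n]$ is indeed partite-extra-saturated, so $e(G')\ge (|T|-2)n$, and the vertices of $X_j$ not witnessed by any partite $T'$ in $G$ are forced to be complete to $X_i$. But when every $v\in X_j$ witnesses some partite copy of $T'$ (your case $|V_j|=n$), you get no lower bound on $e(X_i,X_j)$ from this line of reasoning, and the inductive hypothesis $e(G')\geq(|T|-2)n$ on its own is too weak to absorb the missing $n$. Your proposed fix --- exploiting an isolated vertex of $X_i$ to ``cascade'' complete bipartite structure into $G'$ and then closing with ``parametric bookkeeping modelled on $P_4$'' --- is not carried out, and it is not clear it works: an isolated vertex in $X_i$ only re-derives $V_j=X_j$, while to force genuinely extra edges inside $G'$ you would need to argue about edges of $T'[n]\setminus G'$, whose addition must still create a partite $T$ meeting $X_i$; when $0<e(X_i,X_j)<n$ this gives information that is entangled with the location of those few $X_i$-edges and does not obviously produce a clean surplus. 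In short, the induction needs a strengthened hypothesis (or a different split) and you have not supplied one, whereas the paper avoids the issue entirely by summing degree contributions over core vertices in one pass.
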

\begin{proof}
For an upper bound construction let $G$ be the union of $n$ disjoint partite copies of $T$.
\medskip

Turning our attention to the lower bound we let $L$ denote the set of leaves of $T$ and call the vertices in $C=V(T) \setminus L$ \emph{core} vertices.

Now suppose $G$ is a $(T,T[n])$-extra-saturated graph with $n\geq 4$.
Let $x$ be a vertex of $G$ lying in a part associated to a core vertex $v \in C$.
In $G$ the vertex $x$ must either have a neighbour in each adjacent part of $T[n]$ or it must be that $\deg_G(x) \geq n(\deg_T(v)-1) \geq 2\deg_T(v)$.
This is because if $x$ had no neighbour in some adjacent part it must be adjacent to all vertices in the other adjacent parts.
Since $\deg_T(v)\geq 2$ and $n \geq 4$ this means $x$ has at least $2\deg_T(v)$ neighbours.
We let $L[n]$ and $C[n]$ denote the set of vertices in $T[n]$ that lie in parts corresponding to $L$ and $C$ respectively.

We have
\begin{equation}\label{Eq:TreeExSat}
\begin{split}
e(G) =& \sum_{x \in C[n]} \big(\tfrac{1}{2}\deg_G(x,C[n])+\deg_G(x,L[n])\big) \\
= &\frac{1}{2}\sum_{x \in C[n]} \big( \deg_G(x)+\deg_G(x,L[n])\big) \,. 
\end{split}
\end{equation}
Let $x \in C[n]$ be a vertex associated in the part associated to a vertex $v \in C$.
If $x$ is adjacent to a vertex in each adjacent part then 
\begin{equation*}
\deg_G(x)+\deg_G(x,L[n])\geq \deg_T(v)+\deg_T(v,L)
\end{equation*}
otherwise we also obtain
\begin{equation*}
\deg_G(x)+\deg_G(x,L[n])\geq \deg_G(x) \geq 2\deg_T(v) \geq \deg_T(v) + \deg_T(v,L)\,.
\end{equation*}
Using these and \eqref{Eq:TreeExSat}, we see that 
\begin{equation*}
\begin{split}
e(G) &\geq \frac{n}{2}\sum_{v \in C}\big( \deg_T(v)+\deg_T(v,L)\big) \\
&= n \cdot e(T) =n\big(|T|-1\big)
\end{split}
\end{equation*}
completing the proof.
\end{proof}

\section{Concluding Remarks}\label{Section:Conclusion}
It would be very nice to be able to determine the value of $\sat(K_r,K_r[n])$ for $r\geq 5$.
Exact answers here would probably be very difficult though it may be possible to determine up to an error term of $o(n)$ or even $O(1)$.
It would be helpful to be able to determine the following value in order to make progress on this problem.
\medskip

For integers $r \geq s \geq 3$ let $m(r,s)$ denote the fewest vertices an $r$-partite graph $G$ can have such that $G$ is $K_s$-free but every set of $s-1$ parts contains a $K_{s-1}$.
\medskip

We can use $m(r,r-1)$ and $m(r-1,r-1)$ to get upper and lower bounds respectively on $\sat(K_r,K_r[n])$.
\smallskip

For the upper bound let $F\subset K_r[n]$ be a $K_{r-1}$-free graph on $m(r,r-1)$ vertices such that any $r-2$ parts contain a $K_{r-2}$.
Create a $(K_r,K_r[n])$-saturated graph $G \subset K_r[n]$ by attaching all vertices of $F$ to all vertices outside of $F$ which lie in a different part.
Then if necessary add edges between vertices of $F$ until the graph is $(K_r,K_r[n])$-saturated.
This implies that $\sat(K_r,K_r[n])$ is less than $m(r,r-1)\cdot (r-1)n$.
Using the fact that $m(4,3)=6$ this shows that $\sat(K_4,K_4[n]) \leq 18n$ which we know from Theorem~\ref{Thm:K4Sat} to be close to the correct answer.
\smallskip

For the lower bound we prove a minimum degree condition in all $(K_r,K_r[n])$-saturated graphs.
If $G$ is a $(K_r,K_r[n])$-saturated graph note that any vertex in $G$ is either adjacent to all vertices in one part of $K_r[n]$ or its neighbourhood induces an $(r-1)$-partite graph which is $K_{r-1}$-free but where there is a $K_{r-2}$ on any $r-2$ parts.
Therefore, for $n \geq m(r-1,r-1)$ we have $\delta(G)\geq m(r-1,r-1)$ and hence $\sat(K_r,K_r[n])\geq m(r-1,r-1)\cdot rn/2$.
When $r=4$ this gives the minimum degree condition of $\delta(G) \geq m(3,3)=4$.

\bibliographystyle{amsplain}
\bibliography{Bib}
\end{document}